\setlist[enumerate]{itemsep=0.5ex}
\theoremstyle{plain}
\newtheorem{theorem}{Theorem}[section]
\newtheorem{proposition}[theorem]{Proposition}
\newtheorem{lemma}[theorem]{Lemma}
\newtheorem{conjecture}[theorem]{Conjecture}
\theoremstyle{definition} 
\newtheorem{definition}[theorem]{Definition}
\newtheorem*{claim*}{Claim}
\theoremstyle{remark} 
\newtheorem{remark}[theorem]{Remark}
\numberwithin{equation}{section}
\newcommand{\Sc}{\mathrm{Sc}}
\newcommand{\Bigwedge}{\mathord{\adjustbox{raise=.4ex, totalheight=.7\baselineskip}{$\bigwedge$}}}
\newcommand{\fiber}{\mathbb F}
\newcommand{\grad}{\mathrm{grad}}
\newcommand{\id}{\mathrm{id}}
\newcommand{\R}{\mathbb{R}}
\newcommand{\tr}{\mathrm{tr}}
\newcommand{\ncon}{\prescript{N}{}{\nabla}}
\newcommand{\interior}[1]{%
	{\kern0pt#1}^{\mathrm{\,o}}%
}
\let\save@mathaccent\mathaccent
\newcommand*\if@single[3]{%
	\setbox0\hbox{${\mathaccent"0362{#1}}^H$}%
	\setbox2\hbox{${\mathaccent"0362{\kern0pt#1}}^H$}%
	\ifdim\ht0=\ht2 #3\else #2\fi
}
\newcommand*\rel@kern[1]{\kern#1\dimexpr\macc@kerna}
\newcommand*\overbar[1]{\@ifnextchar^{{\wide@bar{#1}{0}}}{\wide@bar{#1}{1}}}
\newcommand*\wide@bar[2]{\if@single{#1}{\wide@bar@{#1}{#2}{1}}{\wide@bar@{#1}{#2}{2}}}
\newcommand*\wide@bar@[3]{%
	\begingroup
	\def\mathaccent##1##2{%
		\let\mathaccent\save@mathaccent
		\if#32 \let\macc@nucleus\first@char \fi
		\setbox\z@\hbox{$\macc@style{\macc@nucleus}_{}$}%
		\setbox\tw@\hbox{$\macc@style{\macc@nucleus}{}_{}$}%
		\dimen@\wd\tw@
		\advance\dimen@-\wd\z@
		\divide\dimen@ 3
		\@tempdima\wd\tw@
		\advance\@tempdima-\scriptspace
		\divide\@tempdima 10
		\advance\dimen@-\@tempdima
		\ifdim\dimen@>\z@ \dimen@0pt\fi
		\rel@kern{0.6}\kern-\dimen@
		\if#31
		\overline{\rel@kern{-0.6}\kern\dimen@\macc@nucleus\rel@kern{0.4}\kern\dimen@}%
		\advance\dimen@0.4\dimexpr\macc@kerna
		\let\final@kern#2%
		\ifdim\dimen@<\z@ \let\final@kern1\fi
		\if\final@kern1 \kern-\dimen@\fi
		\else
		\overline{\rel@kern{-0.6}\kern\dimen@#1}%
		\fi
	}%
	\macc@depth\@ne
	\let\math@bgroup\@empty \let\math@egroup\macc@set@skewchar
	\mathsurround\z@ \frozen@everymath{\mathgroup\macc@group\relax}%
	\macc@set@skewchar\relax
	\let\mathaccentV\macc@nested@a
	\if#31
	\macc@nested@a\relax111{#1}%
	\else
	\def\gobble@till@marker##1\endmarker{}%
	\futurelet\first@char\gobble@till@marker#1\endmarker
	\ifcat\noexpand\first@char A\else
	\def\first@char{}%
	\fi
	\macc@nested@a\relax111{\first@char}%
	\fi
	\endgroup
}
\begin{document}

\title[Flat corner domination conjecture and Stoker conjecture]{On Gromov's flat corner domination conjecture  and Stoker's conjecture}

\author{Jinmin Wang}
\address[Jinmin Wang]{Department of Mathematics, Texas A\&M University}
\email{jinmin@tamu.edu}
\thanks{The first author is partially supported by NSF 1952693.}
\author{Zhizhang Xie}
\address[Zhizhang Xie]{ Department of Mathematics, Texas A\&M University }
\email{xie@math.tamu.edu}
\thanks{The second author is partially supported by NSF 1800737 and 1952693.}

\begin{abstract}
	In this paper, we prove Gromov's flat corner domination conjecture  in all dimensions. As a consequence, we  answer positively the Stoker conjecture for convex Euclidean polyhedra  in all dimensions. By applying the same techniques, we also prove a rigidity theorem for strictly convex domains in Euclidean spaces.
\end{abstract}
\maketitle
	\section{Introduction}

The main purpose of this paper is to  solve  Gromov's flat corner domination  conjecture (Conjecture \ref{conj:flatcorner})  in all dimensions (Theorem \ref{thm:polyhedra}). As a consequence, we answer positively the 	 Stoker conjecture for convex Euclidean polyhedra  in all dimensions (Theorem \ref{thm:stokereuclidean}). The Stoker conjecture is a question concerning the rigidity of convex Euclidean polyhedra, which roughly says that  the shape of a convex Euclidean polyhedron is determined by its dihedral angles.  See  Theorem \ref{thm:stokereuclidean} below for the precise statement. The conjecture has  attracted a lot of attention over  the past more-than-fifty years since it was proposed by Stoker in 1968 (see the brief discussion after Theorem \ref{thm:stokereuclidean}).  Despite of all the efforts, the  conjecture had resisted all previous attempts. Our approach to  the Stoker conjecture in the present paper is to view the conjecture as a special case in the larger context of comparison problems of scalar curvature, mean curvature and dihedral angles, where the latter is a program prompted by Gromov \cite{GromovDiracandPlateau, Gromovinequalities2018, 	Gromov4lectures2019} and has inspired a wave of research activity in recent years. By new  index theoretic methods, we answer positively Gromov's flat corner domination conjecture in all dimensions. As a consequence, we obtain a positive solution to  the Stoker conjecture  in all dimensions.

 Gromov's flat corner domination  conjecture (Conjecture \ref{conj:flatcorner} below) is one of the fundamental conjectures  among the extensive list of conjectures and open questions on  scalar curvature formulated by Gromov  \cite{GromovDiracandPlateau, Gromovinequalities2018, 	Gromov4lectures2019}. It is closely related to   Gromov's dihedral extremality and rigidity conjectures (see Conjecture \ref{conj:extremal} and Conjecture \ref{conj:dihedral} below). More precisely, Gromov's flat corner domination  conjecture is a  stronger version of  Gromov's dihedral rigidity conjecture. All three conjectures concern the comparisons of scalar curvature, mean curvature and dihedral angles for Riemannian metrics on polyhedra. They can be viewed as scalar curvature analogue of the Alexandrov’s triangle comparisons for spaces whose sectional curvature is bounded below \cite{MR0049584}. These conjectures of Gromov have  profound implications in  geometry and mathematical physics. For example, it implies the positive mass theorem, a foundational result in general relativity and differential geometry
\cite{MR535700,MR612249} \cite{MR626707} (cf. \cite[Discussion after Theorem 1.7] {Wang:2021tq}).

	Before we state our main results, let us first recall Gromov's dihedral extremality and dihedral rigidity conjectures for convex Euclidean polyhedra.   Given a Riemannian metric $g$ on an oriented manifold $M$ with polyhedral boundary (cf. Definition \ref{def:polytopeboundary}), we shall denote the scalar curvature of $g$ by $\Sc(g)$, the mean curvature\footnote{Our sign convention for the mean curvature is that the mean curvature of the standard round sphere viewed as the boundary of a Euclidean ball is positive. } of each face $F_i$ of $M$ by $H_g(F_i)$, and the dihedral angle function of two adjacent faces $F_i$ and $F_j$ by $\theta_{ij}(g)$. Here the dihedral angle $\theta_{ij}(g)_x$ at a point $x\in F_i\cap F_j$ is defined as follows. 
	\begin{definition}\label{def:diheral}
		Write $F_{ij} = F_i\cap F_j$. Let $u$ and $v$ be the unit inner normal vector of $F_{ij}$ with respect to $F_i$ and $F_j$ at $x\in F_{ij}$, respectively.  Let $\theta_{ij}(g)_x$ be either the angle of $u$ and $v$, or $\pi$ plus this angle, depending on the vector $(u+v)/2$ points inward or outward, respectively. See Figure \ref{fig:dihedralangles}.
	\end{definition} 
	
	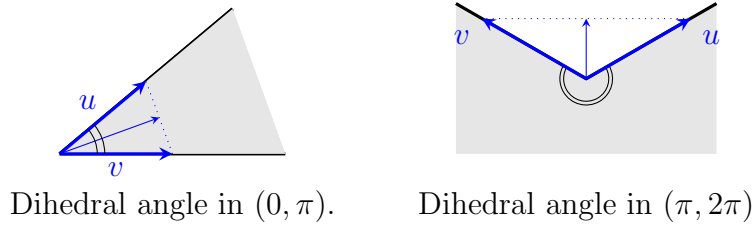
\begin{figure}
		\begin{tikzpicture}[scale=1]
			\fill[gray!20] ({7-sqrt(3)},2) -- (7,1) -- ({7+sqrt(3)},2) -- ({7+sqrt(3)},0) -- ({7-sqrt(3)},0) -- cycle;
			\fill[gray!20] (0,0) -- (3,0) -- ({3*sin(50)},{3*cos(50)}) -- (0,0);
			\draw[very thick,black] (0,0) -- (3,0);
			\draw[very thick,black] (0,0) -- ({3*sin(50)},{3*cos(50)});
			\filldraw (1.5,-0.7) node {Dihedral angle in $(0,\pi)$.};
			\filldraw (7,-0.7) node {Dihedral angle in $(\pi,2\pi)$};
			\draw[very thick,black] ({7-sqrt(3)},2) -- (7,1) -- ({7+sqrt(3)},2) ;
			\draw[very thick,black] (0,0) -- ({3*sin(50)},{3*cos(50)});
			\fill[gray!20] (0,0) -- (3,0) -- ({3*sin(50)},{3*cos(50)}) -- (0,0);
			\draw[very thick, blue,-stealth] (0,0) -- (1.5,0);
			\draw[very thick, blue,-stealth] (0,0) -- ({1.5*sin(50)},{1.5*cos(50)});
			\draw[blue,-stealth] (0,0) -- ({(1.5*sin(50)+1.5)/2},{1.5*cos(50)/2});
			\draw[blue,dotted]({1.5*sin(50)},{1.5*cos(50)}) -- (1.5,0);
			\draw[black] (0.5,0) arc (0:40:0.5);
			\draw[blue] ({1.5*sin(50)/2-0.2},{1.5*cos(50)/2+0.2}) node {$u$};
			\draw[blue] ({1.5/2},-0.2) node {$v$};
			\draw[black] (0.6,0) arc (0:40:0.6);
			\draw[black] ({7+0.15*sqrt(3)},{1+0.15}) arc (30:-210:0.3);
			\draw[black] ({7+0.175*sqrt(3)},{1+0.175}) arc (30:-210:0.35);
			\draw[very thick, blue,-stealth] (7,1) -- ({7+sqrt(3)*0.8},1.8) node[anchor=north west] {$u$};
			\draw[very thick, blue,-stealth] (7,1) -- ({7-sqrt(3)*0.8},1.8) node[anchor=north east] {$v$};
			\draw[blue,-stealth] (7,1) -- ({7},1.8) ;
			\draw[blue,dotted]({7+sqrt(3)*0.8},1.8) -- ({7-sqrt(3)*0.8},1.8);
		\end{tikzpicture}
		\label{fig:dihedralangles}
		\caption{Dihedral angles.}
	\end{figure}
	Here the angle $\theta_{ij}(g)_x$ takes value in $(0,\pi)\cup(\pi,2\pi)$. Roughly speaking, if $M$ is convex at $x$, then $\theta_{ij}(g)_x<\pi $;  and if $M$ is concave at $x$, then  $\theta_{ij}(g)_x>\pi $. Furthermore, our sign convention for the mean curvature is that the mean curvature of the standard round sphere viewed as the boundary of a Euclidean ball is positive. 	
	
\begin{conjecture}[{Gromov's dihedral extremality conjecture for convex polyhedra, \cite[Section 7]{Gromovadozen}}]\label{conj:extremal}
	Let $P$ be a convex polyhedron in $\R^n$ and $g$ the Euclidean metric on $P$. If $\overbar g$ is a smooth Riemannian metric on $P$ such that 
	\begin{enumerate}[label=$(\arabic*)$]
		\item $\Sc(\overbar g)\geq \Sc(g) = 0$,
		\item $H_{\overbar g }(F_i)\geq H_{g}(F_i) = 0$ for each face $F_i$ of $P$, and
		\item $\theta_{ij}(\overbar g)\leq \theta_{ij}(g)$ on each $F_{ij} = F_i\cap F_j$,	
	\end{enumerate}
	then we have 
	\[ \Sc(\overbar g)=0, H_{\overbar g}(F_i) = 0 \textup{ and }  \theta_{ij}(\overbar g) =  \theta_{ij}(g)\]
	for all $i$ and for all $i\neq j$. 
\end{conjecture}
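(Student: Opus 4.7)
The plan is to apply a Dirac-operator-based scalar curvature comparison in the spirit of Gromov's program. Fix the spin structure on $P$ inherited from the ambient $\R^n$ and work with the spinor bundle of the unknown metric $\overbar g$. The fundamental analytic input is a boundary Lichnerowicz--Weitzenb\"ock identity: for any smooth spinor $\psi$ satisfying a suitable chirality-type boundary condition on each face $F_i$,
\begin{equation*}
\int_P |D_{\overbar g}\psi|^2 = \int_P \bigl(|\nabla\psi|^2 + \tfrac{1}{4}\Sc(\overbar g)|\psi|^2\bigr) - \tfrac{1}{2}\sum_i \int_{F_i} H_{\overbar g}(F_i)|\psi|^2 + \sum_{i<j}(\text{edge term on }F_{ij}),
\end{equation*}
where the edge term along $F_{ij}$ is driven by the failure of the two face chirality operators $\chi_i, \chi_j$ to commute; that failure is precisely encoded by the dihedral angle $\theta_{ij}(\overbar g)$ relative to the model angle $\theta_{ij}(g)$.

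Next I would construct, for the flat model $(P, g)$, a nonzero spinor $\psi_g$ that is parallel on $P$ (restrict a constant spinor from $\R^n$) and satisfies the chirality boundary conditions on every face simultaneously, with the compatibility at edges being governed by the actual dihedral angles of $g$. Having exhibited this harmonic spinor for the model, I would set up a boundary value problem for $D_{\overbar g}$ with matching chirality conditions and show, via a homotopy of metrics from $g$ to $\overbar g$ and a relative index argument, that the kernel remains nontrivial, producing a nonzero $\psi_0$ with $D_{\overbar g}\psi_0 = 0$.

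Inserting such a $\psi_0$ into the displayed identity gives $0$ on the left, while each term on the right is nonnegative under the hypotheses $(1)$, $(2)$, $(3)$: the scalar curvature integral by $(1)$, the face integrals by $(2)$, and the edge terms by $(3)$ (here the sign of the dihedral contribution is arranged so that the inequality $\theta_{ij}(\overbar g) \leq \theta_{ij}(g)$ makes it nonnegative relative to the model). Therefore each term vanishes identically, which simultaneously forces $\Sc(\overbar g)\equiv 0$, $H_{\overbar g}(F_i)\equiv 0$, and $\theta_{ij}(\overbar g)\equiv \theta_{ij}(g)$, as claimed; as a bonus $\nabla\psi_0 \equiv 0$, giving a parallel spinor that can be used downstream to push toward rigidity statements.

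The main obstacle will be making the boundary value problem and the integration by parts rigorous at the dihedral edges, where classical elliptic theory for manifolds with smooth boundary fails. The pair of chirality conditions $\chi_i, \chi_j$ on meeting faces is not simultaneously satisfiable unless the geometry at $F_{ij}$ is calibrated to the relation between $\chi_i$ and $\chi_j$ dictated by the dihedral angle, and strata of codimension $\geq 3$ (corners of corners, and so on) introduce further matching constraints. My strategy would be to first establish the identity and the index computation on a smoothed polyhedron with rounded edges, where standard local elliptic boundary theory applies, and then perform a careful distributional limit in which the Gauss and mean curvature concentrated in the rounded tubes converge to the dihedral-angle terms. Controlling the spinor estimates uniformly in this rounding, including at the higher-codimension strata, is exactly where the flat corner domination conjecture enters as the decisive input, and I expect it to be by far the hardest part of the argument.
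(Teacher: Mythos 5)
Note first that the paper does not itself prove this conjecture; it restates it and cites \cite{Wang:2021tq}, the relevant result being reviewed here as Theorem \ref{thm:previous}, from which the extremality statement follows by taking $N = M = P$ and $f$ the identity. Against that machinery, your proposal has the right general shape (Dirac operator, Lichnerowicz-type integral identity with face and edge boundary terms, chirality boundary conditions, index argument, conclusion from positivity of all terms) but departs in ways that would need real repair. You propose to work with the spinor bundle of $\overbar g$ alone, with chirality operators $\chi_i$ on the faces; but then the anticommutator of $\chi_i$ and $\chi_j$ along $F_{ij}$ encodes only $\theta_{ij}(\overbar g)$, not the model angle $\theta_{ij}(g)$, so the hypothesis $\theta_{ij}(\overbar g) \leq \theta_{ij}(g)$ has nothing against which to bite. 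The paper's engine is the \emph{twisted} bundle $S_N \otimes f^* S_M$ with boundary condition \eqref{eq:boundarycondition}, which involves \emph{both} normals $\overbar c(\overbar e_n)$ and $c(e_n)$; that is exactly what injects the model geometry into the boundary-value problem and enables the comparison. To salvage the single-bundle version you would need to trivialize $S_{\overbar g}$ against the flat spinors of $\R^n$, which is the same construction in different notation.

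Second, the paper does not round the corners and pass to a limit: the Dirac operator is analyzed directly on an $H^1$-space adapted to the polyhedral boundary (Definition \ref{def:smooth,H1}), shown Fredholm, and its index computed in terms of the Euler characteristic of $M$ and $\deg(f)$. Nontriviality of the kernel is an index fact, not the endpoint of a metric deformation, and this direct approach is precisely what avoids the uniform corner estimates you worry about. Finally, the appeal to the flat corner domination conjecture to control estimates at higher-codimension strata has the logic reversed in this paper's development: the parallel section $\varphi$ furnished by Theorem \ref{thm:previous} is the \emph{input} to the flat corner domination result (Theorem \ref{thm:strongflat}), which is the strictly stronger \emph{output}. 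Using the stronger statement to derive the weaker one would be circular.
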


\begin{conjecture}[{Gromov's dihedral rigidity conjecture for convex polyhedra, \cite[Section 2.2]{GromovDiracandPlateau}}]\label{conj:dihedral}
		Let $P$ be a convex polyhedron in $\R^n$ and $g$ the Euclidean metric on $P$. If $\overbar g$ is a smooth Riemannian metric on $P$ such that 
	\begin{enumerate}[label=$(\arabic*)$]
		\item $\Sc(\overbar g)\geq \Sc(g) = 0$,
		\item $H_{\overbar g }(F_i)\geq H_{g}(F_i) = 0$ for each face $F_i$ of $P$, and
		\item $\theta_{ij}(\overbar g)\leq \theta_{ij}(g)$ on each $F_{ij} = F_i\cap F_j$,	
	\end{enumerate}
	then $\overbar g$ is also a flat metric. 
\end{conjecture}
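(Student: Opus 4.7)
The plan is to deduce the rigidity claim of Conjecture~\ref{conj:dihedral} from the stronger flat corner domination conjecture (Conjecture~\ref{conj:flatcorner}), proved in this paper as Theorem~\ref{thm:polyhedra}. More precisely, I would first establish a quantitative comparison/domination statement — a distance nonincreasing spinorial comparison from $(P,\overbar g)$ to the flat corner model $(P,g)$ — and then read the rigidity off from its equality case. Passing through this quantitative intermediate step is the whole point: the equality analysis of a comparison theorem is typically much cleaner than attempting rigidity directly.

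The analytic engine is a twisted Dirac operator on the spin manifold with corners $(P,\overbar g)$. Equip $P$ with its spin structure, choose a coefficient bundle $E$ built from a comparison map $(P,\overbar g) \to (P,g)$, and form the twisted Dirac $D_E$ on $S \otimes E$. On each face $F_i$ impose a chirality-type local boundary condition aligned with the inward unit normal $\nu_i$. The key output is a boundary--corner Lichnerowicz--Weitzenb\"ock identity of the schematic form
\begin{equation*}
\int_P |D_E\psi|^2 = \int_P |\nabla\psi|^2 + \tfrac{1}{4}\int_P \Sc(\overbar g)|\psi|^2 + \sum_i \int_{F_i} H_{\overbar g}(F_i)|\psi|^2 + \sum_{i<j}\int_{F_{ij}} \bigl(\theta_{ij}(g)-\theta_{ij}(\overbar g)\bigr)|\psi|^2 + R_E(\psi),
\end{equation*}
valid for $\psi$ satisfying the boundary conditions, with remainder $R_E(\psi) \geq 0$ forced by the construction of $E$. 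Under hypotheses $(1)$--$(3)$ every term on the right is nonnegative.

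To extract rigidity, I would use an index-theoretic input — the nontrivial topological degree of the comparison map — to produce a nonzero harmonic spinor $\psi \in \ker D_E$. The identity above then forces each nonnegative term to vanish: $\Sc(\overbar g) \equiv 0$, $H_{\overbar g}(F_i) \equiv 0$, $\theta_{ij}(\overbar g) \equiv \theta_{ij}(g)$, $\nabla \psi = 0$, and $R_E(\psi) = 0$. Varying the twisting data to obtain a full parallel frame of spinors on $(P,\overbar g)$ then implies that the curvature tensor of $\overbar g$ vanishes identically, so $\overbar g$ is flat, as required.

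The chief obstacle, and presumably the reason the problem has resisted earlier attacks, is the analysis near corners of codimension $\geq 2$. The chirality boundary conditions on adjacent faces do not automatically match along the common edge $F_{ij}$, and producing a self-adjoint Fredholm realization of $D_E$ together with a rigorous Green's identity whose codimension-two contribution carries \emph{exactly} the coefficient $\theta_{ij}(g) - \theta_{ij}(\overbar g)$ requires an honest index theory on manifolds with polyhedral corners. This is presumably the new index-theoretic technology alluded to in the introduction. A secondary task is verifying, in the equality case, that enough parallel spinors exist to conclude full flatness (and not merely Ricci flatness), which exploits the simple connectivity of the convex polyhedron $P$ together with the standard spinorial Bochner technique.
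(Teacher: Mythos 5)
Your high-level outline -- twisted Dirac operator on $S_N \otimes f^*S_M$ with chirality boundary conditions, a corner-adapted Lichnerowicz--Weitzenb\"ock identity, a nonzero kernel element forced by nonzero degree, and the resulting equality case yielding a parallel section $\varphi$ together with $\Sc(\overbar g)=0$, $H_{\overbar g}(F_i)=0$, $\theta_{ij}(\overbar g)=\theta_{ij}(g)$ -- is correct, but this is all already the content of Theorem~\ref{thm:previous} (i.e.\ the extremality result of \cite{Wang:2021tq}). You have misidentified the location of the genuinely new difficulty: the index theory on manifolds with polyhedral corners was done in the earlier paper, and the step that resisted all dimensions $n\geq 4$ is precisely the last one, passing from a single parallel section of $S_N\otimes f^*S_M$ to flatness of $\overbar g$.

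That last step is where your proposal has a real gap. A single parallel spinor on a Ricci-flat spin manifold does \emph{not} imply flatness in dimension $\geq 4$ (think of $\mathrm{SU}(m)$- or $G_2$-holonomy), and ``varying the twisting data'' plus ``simple connectivity and the standard spinorial Bochner technique'' is not a mechanism for producing the needed full parallel frame; it is not what the paper does, and as stated I don't see how it would work. The paper's actual argument is the key new idea: because the \emph{target} $(P,g)$ is a flat domain in $\mathbb R^n$, one has $n$ globally parallel orthonormal sections $v_1,\dots,v_n$ of $TM$, and hence $2^n$ parallel sections $\omega_\lambda=\wedge_{i\in\lambda}v_i$ of $\Bigwedge^*TM$. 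Applying Clifford multiplication on the coefficient factor, one sets $\varphi_\lambda:=(1\otimes c(\omega_\lambda))\varphi$; each $\varphi_\lambda$ is parallel. The crucial and nontrivial claim is that the $\varphi_\lambda$ are mutually orthogonal, so that they form a full parallel frame of the $2^n$-dimensional bundle $S_N\otimes f^*S_M$, whence its curvature vanishes; since $f^*S_M$ is flat this forces $R^{S_N}=0$ and hence $R^{\overbar g}=0$. Orthogonality is \emph{not} a Bochner or topological argument: for $\lambda\neq\mu$ one picks $k\in\mu\setminus\lambda$, minimizes the linear functional $\langle\,\cdot\,,v_k\rangle$ over $P$ to find a boundary point $x$ whose inner normal data involve $v_k$, locates a corresponding preimage $\overbar x\in\partial N$ using $\deg f\neq 0$, and then plays the (possibly multiple, at a corner) chirality boundary conditions at $\overbar x$ off against the sign behaviour of $c(\omega_\lambda)c(v_k)$ versus $c(\omega_\mu)c(v_k)$ to force $\langle\varphi_\lambda(\overbar x),\varphi_\mu(\overbar x)\rangle=0$; since the inner product of two parallel sections is constant, this kills it everywhere. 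You should make this boundary-condition orthogonality argument explicit; without it the proposal stops exactly where the earlier paper stopped.
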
 

Gromov's flat corner domination conjecture is an even stronger conjecture on convex Euclidean polyhedra  \cite[Section 3.18]{Gromov4lectures2019}. 
\begin{conjecture}[Gromov's flat corner domination conjecture]\label{conj:flatcorner}
	Let $P$ be a convex polyhedron in $\R^n$ and $g$ the Euclidean metric on $P$. If $\overbar g$ is a smooth Riemannian metric on $P$ such that 
	\begin{enumerate}[label=$(\arabic*)$]
		\item $\Sc(\overbar g)\geq \Sc(g) = 0$,
		\item $H_{\overbar g }(F_i)\geq H_{g}(F_i) = 0$ for each face $F_i$ of $P$, and
		\item $\theta_{ij}(\overbar g)\leq \theta_{ij}(g)$ on each $F_{ij} = F_i\cap F_j$,	
	\end{enumerate}
	then $\overbar g$ is flat and  all codimension one faces of $(P, \overbar g)$ are flat; moreover, at every point $x\in P$, the manifold $(P, \overbar g)$ is locally isometric to $(P, g)$. 
\end{conjecture}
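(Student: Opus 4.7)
The plan is to prove Conjecture \ref{conj:flatcorner} by an index-theoretic approach using Dirac operators on manifolds with corners, in the spirit of Gromov's program. The key observation is that Conjecture \ref{conj:flatcorner} is a rigidity refinement of the dihedral extremality conjecture (Conjecture \ref{conj:extremal}): once one has the equalities $\Sc(\overbar{g}) = 0$, $H_{\overbar{g}}(F_i) = 0$, and $\theta_{ij}(\overbar{g}) = \theta_{ij}(g)$, the remaining task is to upgrade these pointwise vanishings to a local isometry with the Euclidean model.

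I would carry this out in two principal steps. The first step is to prove Conjecture \ref{conj:extremal} by a spinorial Weitzenb\"ock argument. On the spin manifold $(P, \overbar{g})$ I introduce a Dirac-type operator $D$ on the spinor bundle (possibly twisted by an auxiliary bundle to strengthen the index-theoretic output), and impose on each face $F_i$ a local chirality-type boundary condition adapted to the inner unit normal. Crucially, the boundary conditions chosen on two adjacent faces $F_i$ and $F_j$ must be compatible along the common edge $F_{ij}$, and this compatibility is dictated precisely by the dihedral angle $\theta_{ij}(\overbar{g})$. Applying the Lichnerowicz--Weitzenb\"ock formula and Stokes' theorem stratum by stratum yields an identity of the form
\[
\int_P |D\psi|^2 \;=\; \int_P \Bigl( |\nabla\psi|^2 + \tfrac{1}{4}\Sc(\overbar{g})|\psi|^2 \Bigr) + \sum_i \int_{F_i} \mathcal{H}_i(\psi) + \sum_{i<j} \int_{F_{ij}} \mathcal{A}_{ij}(\psi),
\]
in which the face integrands $\mathcal{H}_i(\psi)$ are nonnegative whenever $H_{\overbar{g}}(F_i)\geq 0$, and the edge integrands $\mathcal{A}_{ij}(\psi)$ are nonnegative whenever $\theta_{ij}(\overbar{g}) \leq \theta_{ij}(g)$. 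A nontriviality argument producing a nonzero harmonic spinor $\psi$ with $D\psi = 0$ (either by index theory on a suitable double or by a degree/obstruction argument for the boundary value problem) then forces every integrand to vanish, yielding $\Sc(\overbar{g}) \equiv 0$, $H_{\overbar{g}}(F_i) \equiv 0$, $\theta_{ij}(\overbar{g}) \equiv \theta_{ij}(g)$, together with a nonzero parallel spinor $\nabla\psi \equiv 0$.

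The second step upgrades the extremality to the full rigidity of Conjecture \ref{conj:flatcorner}. With a sufficiently rich family of parallel spinors (obtained by varying the twisting and chirality data), the identity $R(X,Y)\psi = 0$ forces flatness of $\overbar{g}$ and so recovers Conjecture \ref{conj:dihedral}. To conclude further that each face $F_i$ is flat and that $(P, \overbar{g})$ is locally isometric to $(P, g)$ at every point, I would exploit the boundary behavior of the parallel spinors: differentiating the chirality boundary condition on $F_i$ in tangential directions and using $\nabla\psi = 0$ produces a linear expression in the full second fundamental form $\mathrm{II}_i$ of $F_i$, which must annihilate an ample space of boundary spinors. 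This forces $\mathrm{II}_i \equiv 0$, so every codimension-one face is totally geodesic, and by the Gauss equation in a flat ambient it is itself flat. Combined with the equality $\theta_{ij}(\overbar{g}) = \theta_{ij}(g)$ on each edge, a stratum-by-stratum induction working outward from the interior to the lower-dimensional strata then produces, at every $x \in P$, a local isometry with the corresponding flat Euclidean corner.

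The main obstacle, and the heart of the paper, is the consistent construction of boundary conditions across \emph{all} strata of the polyhedron together with the extraction of the correct positivity for the edge and corner integrands in terms of the dihedral angles. Already at an edge $F_{ij}$ the compatibility condition intertwines the local chirality projections on $F_i$ and $F_j$ by a rotation through $\theta_{ij}(\overbar{g})$, and deriving $\mathcal{A}_{ij} \geq 0$ from $\theta_{ij}(\overbar{g}) \leq \theta_{ij}(g)$ requires a careful analysis involving Clifford multiplication. For strata of codimension $\geq 3$, where three or more faces meet in combinatorially nontrivial ways, one must inductively produce a coherent system of compatible boundary conditions; this is where the convexity of $P$ and the new index-theoretic techniques developed in the paper should be decisive. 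Once such a compatible system is in place on every stratum, the integration identity above together with the rigidity argument completes the proof.
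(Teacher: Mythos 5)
Your overall strategy matches the paper's: produce a nonzero parallel section $\varphi$ of the twisted spinor bundle $S_N\otimes f^*S_M$ satisfying a chirality boundary condition on each codimension-one face (this is the prior result the paper imports as Theorem~\ref{thm:previous}), then upgrade parallelness to flatness and to local isometry. However, your Step~2 has two substantial gaps that are precisely where the paper's new ideas live.

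First, you appeal to ``a sufficiently rich family of parallel spinors (obtained by varying the twisting and chirality data)'' to force $R(X,Y)\psi=0$, but this is not a mechanism. A single parallel section does not give flatness of $\overbar g$; one needs a full trivialization of the $2^n$-dimensional bundle $S_N\otimes f^*S_M$ by parallel sections. The paper's device exploits the flatness of the Euclidean target $M$: take a global parallel orthonormal frame $\{v_1,\dots,v_n\}$ of $TM$, form the parallel forms $\omega_\lambda=\wedge_{i\in\lambda}v_i$, and set $\varphi_\lambda=(1\otimes c(\omega_\lambda))\varphi$. The crucial and nonobvious claim is that the $2^n$ sections $\varphi_\lambda$ are \emph{mutually orthogonal}, hence trivialize the bundle. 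This is proven by an extremal-point argument: given $\lambda\neq\mu$, pick $v_k$ separating them, minimize $\langle y,v_k\rangle$ over $M$, and at the minimizer use the one (or several) chirality boundary conditions to show $\langle\varphi_\lambda,\varphi_\mu\rangle=0$, hence identically zero by parallelness. Without this orthogonality step there is no route to flatness of $(P,\overbar g)$.

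Second, your concluding step --- ``combined with the equality $\theta_{ij}(\overbar g)=\theta_{ij}(g)$ on each edge, a stratum-by-stratum induction working outward from the interior to the lower-dimensional strata then produces, at every $x\in P$, a local isometry'' --- is not valid as stated. At a vertex where more than $n$ codimension-one faces meet (exactly the situation that makes the Stoker conjecture nontrivial), the dihedral angles between \emph{adjacent} faces do not determine the local corner; one also needs the angles between non-adjacent faces, i.e.\ the full Gram matrix of inner unit normals. This is supplied by part~(ii) of Theorem~\ref{thm:strongflat}: at a point $\overbar x$ in the intersection of $\ell$ codimension-one faces, $\varphi$ satisfies $\ell$ simultaneous boundary conditions $(\overbar\epsilon\,\overbar c(\overbar\nu_j)\otimes 1)\varphi(\overbar x)=-(1\otimes\epsilon c(\nu_j))\varphi(\overbar x)$, and taking arbitrary linear combinations followed by the polarization identity yields $\langle\overbar\nu_i,\overbar\nu_j\rangle_{\overbar g}=\langle\nu_i,\nu_j\rangle_g$ for all $i,j$, not just for adjacent faces. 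Your induction on dihedral angles alone would not recover this information and would therefore fail to produce the local isometry.
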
 
We emphasize that  Gromov's flat corner domination conjecture not only determines the geometry of the interior of $(P, \overbar g)$ (which has to be flat), but also determines  the geometry of all faces (of any codimension) and all angles (not necessarily dihedral angles) between  faces (of any codimension). The latter implication will be the key step for solving the Stoker conjecture.  

Our first  main result of the paper is a positive solution to the above flat corner domination conjecture in all dimensions. 

\begin{theorem}\label{thm:polyhedra}
	Let $P$ be a convex polyhedron in $\R^n$ and $g$ the Euclidean metric on $P$. If $\overbar g$ is a smooth Riemannian metric on $P$ such that 
	\begin{enumerate}[label=$(\arabic*)$]
		\item $\Sc(\overbar g)\geq \Sc(g) = 0$,
		\item $H_{\overbar g }(F_i)\geq H_{g}(F_i) = 0$ for each face $F_i$ of $P$, and
		\item $\theta_{ij}(\overbar g)\leq \theta_{ij}(g)$ on each $F_{ij} = F_i\cap F_j$,	
	\end{enumerate}
	then $\overbar g$ is flat and  all codimension one faces of $(P, \overbar g)$ are flat; moreover, at every point $x\in P$, the manifold $(P, \overbar g)$ is locally isometric to $(P, g)$. 
\end{theorem}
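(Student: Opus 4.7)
The plan is to employ a spin-geometric comparison between $(P,\overbar g)$ and the Euclidean model $(P,g)$ using a Dirac operator with face-adapted local boundary conditions. Since $P\subset\R^n$ is spin, I would consider the spinor bundle $\cS$ over $(P,\overbar g)$ and the associated Dirac operator $D$. For each codimension-one face $F_i$ I would impose a local chirality boundary condition $B_i$ built from Clifford multiplication by the inward unit normal $\nu_i$; on each edge $F_{ij}$ the two involutions $B_i,B_j$ must interact via a Clifford rotation whose angle equals $\theta_{ij}(g)$. The dihedral hypothesis $\theta_{ij}(\overbar g)\le \theta_{ij}(g)$ is precisely what makes the resulting boundary value problem elliptic in the appropriate graded sense at each edge.

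With this setup, a Lichnerowicz--Weitzenb\"ock calculation combined with face-by-face integration by parts gives a schematic identity
\[
\int_P |D\psi|^2 = \int_P |\nabla\psi|^2 + \tfrac14\int_P \Sc(\overbar g)\,|\psi|^2 + \sum_i \int_{F_i} H_{\overbar g}(F_i)\,|\psi|^2 + E(\psi),
\]
where $E(\psi)$ is a corner contribution bounded below by $\sum_{i<j}\int_{F_{ij}} \bigl(\theta_{ij}(g)-\theta_{ij}(\overbar g)\bigr)|\psi|^2\,d\sigma$. Under hypotheses $(1)$--$(3)$ every term on the right is non-negative when $\psi$ satisfies the boundary conditions. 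A relative-index or continuous-family argument, interpolating the metric from $g$ to $\overbar g$, then shows that $\ker D$ contains an image of the space of constant Euclidean spinors on $(P,g)$, all of which trivially satisfy $D=0$, $\nabla=0$, and the corner compatibility because $P$ is convex.

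For such a $\psi$ the identity becomes an equality, forcing every non-negative term to vanish pointwise: $\Sc(\overbar g)\equiv 0$, $H_{\overbar g}(F_i)\equiv 0$, $\theta_{ij}(\overbar g)\equiv \theta_{ij}(g)$, and $\nabla\psi\equiv 0$. Running this for a full basis of Euclidean constant spinors yields enough parallel spinors to conclude that the holonomy of $\overbar g$ is trivial, hence $\overbar g$ is flat. To upgrade interior flatness to flatness of each codimension-one face and to local isometry with $(P,g)$, I would restrict a parallel spinor to $F_i$: the vanishing of the mean curvature together with the chirality boundary condition forces $F_i$ to be totally geodesic in $(P,\overbar g)$, the restricted spinor is parallel on $F_i$, and hence $F_i$ is flat. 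Iterating on strata of higher codimension, using the already-matched dihedral angles together with the rigidity of flat cones over spherical polyhedra, reconstructs the Euclidean corner model at every point.

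The principal obstacle will be the analytic setup of the Dirac operator at corners of codimension $\ge 3$, where three or more faces meet simultaneously: one must choose boundary involutions $B_i$ that are jointly compatible at all such corners so that the resulting operator is self-adjoint and Fredholm, and one must verify that the pairwise edge compatibilities actually suffice to control the higher-codimension corner behaviour. I expect this to be the technical heart of the paper, requiring either a stratum-by-stratum induction combined with a careful smoothing of the higher-codimension corners that preserves hypotheses $(1)$--$(3)$ up to controlled error, or a global construction of the boundary conditions from the Clifford algebra generated by the Euclidean face normals.
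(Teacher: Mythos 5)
Your broad outline—a Dirac operator adapted to the polyhedral boundary, a Lichnerowicz/Weitzenb\"ock identity with face and edge contributions, an index argument producing a nontrivial harmonic spinor, and then rigidity from parallel spinors—matches the architecture of the paper (which builds on Wang--Xie--Yu \cite{Wang:2021tq} for the extremality part). But there is a genuine gap at the decisive step. You assert that a relative-index or deformation argument shows $\ker D$ ``contains an image of the space of constant Euclidean spinors,'' and then you ``run this for a full basis'' to get enough parallel spinors to kill the holonomy. That is not what the index gives you: the index of the boundary-value problem equals $\chi(P)=1$, so the Fredholm argument yields only \emph{one} nonzero parallel section, not a $2^{n/2}$- or $2^n$-dimensional family. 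A single parallel spinor on $(P,\overbar g)$ does not in general force flatness of $\overbar g$, so your proposal as written does not close.

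The paper's new idea, which is the actual technical heart, is to work on the twisted bundle $S_N\otimes f^*S_M$ (of rank $2^n$) and to manufacture the missing parallel sections by hand: starting from the single parallel $\varphi$ produced by the index theorem, one multiplies by the $2^n$ parallel forms $\omega_\lambda=\wedge_{i\in\lambda}v_i$ coming from the flat target, getting parallel sections $\varphi_\lambda=(1\otimes c(\omega_\lambda))\varphi$. The crucial point is then a \emph{corner} argument: using the fact that at a vertex (or more generally at a point where several codimension-one faces meet) $\varphi$ must satisfy all the corresponding boundary conditions simultaneously, and that the Euclidean inner normals at that vertex span $\R^n$, one shows the $\varphi_\lambda$ are mutually orthogonal. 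This forces $S_N\otimes f^*S_M$ to be flat, hence $S_N$ flat, hence $\overbar g$ flat. This mechanism is absent from your proposal; you identified that higher-codimension corners are the ``technical heart,'' but for the wrong reason—they are not an analytic obstruction to be smoothed away, they are precisely the geometric input that produces the extra parallel spinors. Your final step (vanishing second fundamental form of faces from the boundary condition, and recovering the corner model) is in the right spirit, though the paper does this by differentiating the boundary identity along the face and by a polarization argument with the normals at a vertex, rather than by invoking rigidity of flat cones over spherical polyhedra (which would be circular here, since Schlenker showed the spherical Stoker conjecture is false).
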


In our previous joint paper with Yu \cite{Wang:2021tq}, the authors completely settled  Gromov's dihedral extremality conjecture for convex polyhedra   in all dimensions \cite[Theorem 1.8]{Wang:2021tq} by developing  a new index theory for manifolds with polyhedral boundary.   Moreover, the same techniques  in \cite{Wang:2021tq} also imply  Gromov's dihedral rigidity conjecture for convex polyhedra in dimension three \cite[Theorem 1.8]{Wang:2021tq}. While the techniques  in \cite{Wang:2021tq} fell short of proving the dihedral rigidity conjecture in dimension $\geq 4$, we shall  apply new index theoretic methods in the present paper to solve Gromov's dihedral rigidity   conjecture in all dimensions.  In fact, our methods lead to a positive solution to Gromov's flat corner domination conjecture in all dimensions.  More precisely, we prove the general form of  Gromov's flat corner domination  conjecture  that actually  allows comparisons of possibly  different manifolds \cite[Section 3.18]{Gromov4lectures2019}. See Theorem \ref{thm:strongflat} below and its variant Theorem \ref{thm:flatWithVertex}  for the precise details. We then deduce Theorem \ref{thm:polyhedra}  as a special case of either Theorem \ref{thm:strongflat} or Theorem \ref{thm:flatWithVertex}.

As a consequence of Theorem \ref{thm:polyhedra}, we answer positively the Stoker conjecture for convex Euclidean polyhedra in all dimensions \cite{MR222765}.  More precisely, we have the following theorem. 

\begin{theorem}\label{thm:stokereuclidean}
If $P_1$ and $P_2$ are two convex Euclidean polyhedra of the same combinatorial type such that all corresponding dihedral angles are equal, then all corresponding face angles\footnote{Here the face angles refer to the dihedral angles of each codimension one face (thought of as a polyhedron itself).}  are equal. 
\end{theorem}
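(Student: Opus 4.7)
The plan is to derive Theorem~\ref{thm:stokereuclidean} from Theorem~\ref{thm:polyhedra} by transplanting the Euclidean geometry of $P_2$ onto $P_1$. Fix a combinatorial isomorphism between $P_1$ and $P_2$ and realize it by a face-preserving diffeomorphism $\phi\colon P_1\to P_2$ (constructed, for instance, by choosing a common simplicial refinement and gluing affine identifications face by face; alternatively one may invoke the more general formulation of the flat corner domination conjecture---referenced in the introduction as Theorem~\ref{thm:strongflat}---which compares metrics on \emph{distinct} polyhedral manifolds and so bypasses any regularity issue at lower-dimensional strata). Let $\overbar g=\phi^* g_2$ be the pullback of the Euclidean metric on $P_2$ to $P_1$.

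I would then verify that $\overbar g$ satisfies the three hypotheses of Theorem~\ref{thm:polyhedra} relative to the Euclidean metric $g_1$ on $P_1$, with \emph{equality} in each one. Flatness of $(P_2,g_2)$ gives $\Sc(\overbar g)=\Sc(g_1)=0$. Each codimension-one face of $P_2$ lies in an affine hyperplane of $\R^n$, hence is totally geodesic in $(P_2,g_2)$, so the corresponding face of $(P_1,\overbar g)$ has vanishing mean curvature, matching $H_{g_1}(F_i)=0$. Finally, because $\overbar g$ is locally isometric to $g_2$, the dihedral angle $\theta_{ij}(\overbar g)$ along $F_{ij}^{(1)}$ equals the dihedral angle of $P_2$ along the corresponding codim-two face $F_{ij}^{(2)}$, which by the Stoker hypothesis equals $\theta_{ij}(g_1)$ along $F_{ij}^{(1)}$. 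So all three hypotheses hold tautologically.

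Theorem~\ref{thm:polyhedra} then asserts that at every point $x\in P_1$, $(P_1,\overbar g)$ is locally isometric to the Euclidean polyhedron $(P_1,g_1)$. Take $x$ in the relative interior of a codimension-three face $F_{ijk}^{(1)}=F_i^{(1)}\cap F_j^{(1)}\cap F_k^{(1)}$. The local isometry identifies the Euclidean tangent cone of $(P_1,g_1)$ at $x$ with the tangent cone of $(P_1,\overbar g)$ at $x$, and the latter is isometric via $d\phi_x$ to the Euclidean tangent cone of $P_2$ at $\phi(x)$. Hence these two tangent cones are congruent as Euclidean cones with labeled faces; in particular the planar angle at the apex of the two-dimensional sub-cone corresponding to $F_i$---which is precisely the face angle of $F_i$ along $F_{ijk}$---coincides in $P_1$ and $P_2$. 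Ranging over all triples $(i,j,k)$ yields equality of all face angles, as required. The only real technical subtlety is arranging that $\phi$ is regular enough for $\overbar g$ to be a genuine smooth metric compatible with the polyhedral boundary structure required by Theorem~\ref{thm:polyhedra}, which can be handled by a careful face-by-face construction or sidestepped using the stronger Theorem~\ref{thm:strongflat}; beyond that, the deduction of Stoker's conjecture is formal and rests entirely on extracting the \emph{local isometry at every point} guaranteed by the flat corner domination conclusion, rather than the weaker interior flatness alone.
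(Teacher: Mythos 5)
Your overall strategy (deduce Stoker from the flat corner domination results) is the paper's, and the fallback you mention in passing—apply Theorem~\ref{thm:strongflat} directly to the two distinct polyhedra—is in fact exactly what the paper does. But your primary route has a genuine gap, and it sits precisely where the face angles live. Theorem~\ref{thm:polyhedra} requires $\overbar g=\phi^*g_2$ to be a \emph{smooth} Riemannian metric on $P_1$, hence requires $\phi$ to be smooth in the polytope-chart sense on all of $P_1$, including at the strata of codimension $\geq 3$. Such a $\phi$ need not exist: smoothness at a vertex $v$ would force $d\phi_v$ to be a linear isomorphism carrying the tangent cone of $P_1$ at $v$ onto that of $P_2$ at $\phi(v)$, and combinatorially equivalent polyhedral cones are in general not linearly equivalent once more than $n$ codimension-one faces meet at a vertex (a dimension count on $GL_n$ rules it out for vertices with many facets); worse, linear equivalence of the vertex cones is essentially the statement you are trying to prove, so arranging it beforehand is circular. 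Your concrete construction (common simplicial refinement glued from affine pieces) only yields a PL, hence Lipschitz but non-smooth, map, so $\phi^*g_2$ is merely a Lipschitz metric and Theorem~\ref{thm:polyhedra} does not apply; and your final step invokes $d\phi_x$ at an interior point of a codimension-three face, i.e.\ exactly where $\phi$ cannot be assumed differentiable. So the ``technical subtlety'' you defer is not technical—it is the reason the paper does not argue by pulling back the metric, and the reason polytope maps are only required to be smooth away from codimension-three faces.

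The paper's proof runs the comparison with no pullback at all: take $N=P_1$ and $M=P_2$ with their own Euclidean metrics, choose the combinatorial homeomorphism $f$ to be a polytope map (defined away from the codimension-three faces and extended radially), check the hypotheses of Theorem~\ref{thm:strongflat} hold with equalities—the distance-non-increasing condition on $\partial N$ can be dropped because every face of $P_2$ is flat, as remarked after that theorem—and then read off the face angles not from a local-isometry statement but from conclusion $(ii)$: at any vertex of $P_1$ the Gram matrix of the unit inner normals of the codimension-one faces equals that of the corresponding normals of $P_2$, which determines the corner up to congruence and hence all face angles. To repair your write-up, replace the pullback step by this direct application of Theorem~\ref{thm:strongflat} (or use Theorem~\ref{thm:flatWithVertex}, whose part $(ii)$ gives verbatim the face-matching local isometry of $(P_1,g_1)$ at $x$ with $(P_2,g_2)$ at $f(x)$ that your tangent-cone argument needs).
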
 

 Let us mention briefly some of the  previous work on the Stoker conjecture. There have been many attempts to solve the Stoker conjecture in the past fifty years. The conjecture was verified in some special cases. For example,  Karcher verified the Stoker conjecture for a class of $3$-dimensional convex polyhedra with 5 vertices and 6 faces \cite{MR222766}. There is also an analogous conjecture for convex hyperbolic polyhedra, which has also been known in some special cases. For example, Andreev \cite{MR0259734} proved the hyperbolic Stoker conjecture for convex hyperbolic polyhedra with all dihedral angles less than $\pi/2$.  Mazzeo and Montcouquiol proved a weaker version (an \emph{infinitesimal} version) of the Stoker conjecture  \cite[Theorem 1]{MR2819548}. We refer the reader to \cite[Theorem 1]{MR2819548} for the precise statement of this infinitesimal version of the Stoker conjecture.  We should also mention that the analogue of Stoker's conjecture for convex spherical polyhedra is false, due to counterexamples of Schlenker \cite{MR1744513}. 

So far, we have been mainly concerned with convex polyhedra in Euclidean spaces. In fact, we can apply the same methods of the present paper to prove similar rigidity results for manifolds with smooth boundary. For example,  we have the following rigidity theorem for strictly convex domains with smooth boundary in Euclidean spaces.

\begin{theorem}\label{thm:balliso}
	Let $(M,g)$ be a strictly convex domain  with smooth boundary in $\R^n$ \textup{($n\geq 2$)}. Let $(N,\overbar g)$ be a spin Riemannian manifold with boundary and $f\colon N\to M$ be a spin map. If
	\begin{enumerate}[label=$(\arabic*)$]
		\item 
		$\Sc(\overbar{g})_x \geq \Sc(g)_{f(x)} = 0$ for all $x\in N$, 
		\item $H_{\overbar{g}}(\partial N)_y \geq  H_{g}(\partial M)_{f(y)}$ for all $y\in \partial N$, 
		\item $f$ is distance-non-increasing on $N$, 
		\item the degree of $f$ is nonzero,	
	\end{enumerate} 
	then $f$ is an isometry. 
\end{theorem}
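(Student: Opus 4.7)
The plan is to establish Theorem \ref{thm:balliso} by an index-theoretic argument analogous to the one underlying Theorem \ref{thm:polyhedra}, but adapted to a smooth strictly convex boundary in place of a polyhedral one. The template is Llarull/Gromov: use the hypothesis $\deg f \ne 0$ to force a nontrivial harmonic spinor via an index computation, then extract pointwise rigidity from the equality case of a Weitzenb\"ock identity with boundary term.

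Since $N$ is spin and $f$ is a spin map into the Euclidean region $M$, the first step is to form a twisted Dirac bundle $E = S_N\hotimes f^{*}F$ on $N$, where $F\to M$ is an auxiliary Clifford module whose relative $K$-theory class pairs with $[M,\partial M]$ to give a generator. The twisted Dirac operator $D_E$ with a local chiral boundary condition $B$ adapted to the outward normal of $\partial N$ then has Fredholm index equal to $\deg f$ up to a nonzero constant, by a relative index theorem. The Weitzenb\"ock identity for $(D_E,B)$ reads
\[
\int_N |D_E \psi|^2 = \int_N |\nabla \psi|^2 + \tfrac14\int_N \Sc(\overbar g)|\psi|^2 + \int_N \langle R^{f^{*}F}\psi,\psi\rangle + \tfrac12\int_{\partial N} \bigl(H_{\overbar g}(\partial N) - H_g(\partial M)\circ f\bigr)|\psi|^2.
\]
Because $M$ is contractible and $F$ may be taken flat there, the twist curvature term vanishes; the combined hypotheses $\Sc(\overbar g)\ge 0$, $H_{\overbar g}(\partial N)\ge H_g(\partial M)\circ f$, together with the fact that $f$ is distance-non-increasing (which controls the residual contribution of the second fundamental form of $\partial M$ pulled back along $df$), make every term on the right nonnegative.

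The nonvanishing of the index furnishes a nontrivial $\psi\in\ker(D_E,B)$. Forcing each summand in the Weitzenb\"ock identity to vanish yields a parallel section $\psi$ of $E$, vanishing of $\Sc(\overbar g)$, pointwise equality $H_{\overbar g}(\partial N) = H_g(\partial M)\circ f$ on $\partial N$, and saturation of the distance-non-increasing inequality on $\partial N$. The existence of such a parallel twisted spinor against a flat target compels $df$ to be a pointwise isometry on $\partial N$; this equality propagates into the interior via the parallel spinor and an ODE argument along geodesics normal to $\partial N$. Combined with $\deg f\ne 0$ and the simple connectivity of $M$, $f$ is then a global isometry.

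The main obstacle is the construction of the twist $F$ and the boundary condition $B$ in the flat-target regime: unlike Llarull for the round sphere, the interior scalar curvature of $M$ vanishes and provides no bulk term, so the rigidity must be extracted entirely from the boundary. This is precisely where strict convexity of $\partial M$ (rather than mere mean-convexity) enters — it ensures the boundary term is nondegenerate enough to force $df$ to be a pointwise isometry rather than length-preserving in some weaker sense. A secondary technical point is propagating equality from $\partial N$ into all of $N$, which relies on the control that the parallel spinor exerts on Fermi coordinates around $\partial N$.
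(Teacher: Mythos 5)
Your overall index-theoretic setup (flat twist since $M\subset\R^n$, chiral boundary condition, nonzero index from $\deg f\neq 0$, Weitzenb\"ock equality giving a parallel twisted spinor, vanishing scalar curvature, mean-curvature equality, and strict convexity of $\partial M$ forcing $df$ to be an isometry on $\partial N$) matches the paper's strategy up to the point where the boundary rigidity is established. The fatal gap is your final step: you claim that the pointwise isometry on $\partial N$ ``propagates into the interior via the parallel spinor and an ODE argument along geodesics normal to $\partial N$,'' so that $f$ itself becomes an isometry. This cannot work. Since $M$ is flat, $f^{*}S_M$ with the pulled-back connection is flat no matter what $f$ does in the interior, so the parallel spinor, the boundary conditions, and all the equality cases in the Weitzenb\"ock identity see only the metric $\overbar g$ and the restriction $f|_{\partial N}$; they place no constraint on the interior behavior of $f$ beyond its degree. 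The paper makes this explicit with a counterexample: the maps $re^{i\theta}\mapsto re^{i(\theta+\rho(r))}$ of the unit disk are area-preserving, degree one, equal to the identity on the boundary, and satisfy every spinorial conclusion, yet are not isometries. So the hypothesis that $f$ is distance-non-increasing on all of $N$ must be used directly and non-spinorially, and your proposal never does so.

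What the paper actually does is first prove a weaker statement (its Theorem \ref{thm:ballAlmostIso}): the spinor argument plus Proposition \ref{prop:strictlyConvex} gives that $f|_{\partial N}$ is a local isometry and $(N,\overbar g)$ is flat; one then shows $f$ preserves the second fundamental form of the boundary (via part $(iii)$ of Theorem \ref{thm:strongflat}), builds the developing map $p\colon\widetilde N\to\R^n$, and runs a covering-space argument using strict convexity and simple connectivity of $M$ to conclude that, up to an affine isometry, $N$ coincides with $M$ and $f|_{\partial N}$ becomes the identity. Only then does the full-interior Lipschitz hypothesis enter, through an elementary argument you are missing: the map $h\coloneqq f\circ p^{-1}\colon M\to M$ is distance-non-increasing and fixes $\partial M$ pointwise, so for any two boundary points the straight chord $\ell$ between them (which lies in $M$ by convexity) is mapped to a curve joining the same endpoints of length at most that of $\ell$, forcing $h$ to map $\ell$ to itself isometrically; since such chords fill $M$, $h$ is the identity and $f$ is an isometry. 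Your sketch also glosses over the intermediate steps (second-fundamental-form rigidity, the developing map, and the covering argument) needed even to identify $(N,\overbar g)$ with $M$, but the decisive missing idea is this last chord argument, without which the stated conclusion about $f$ itself is out of reach.
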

   Here $f\colon N \to M$ is said to be distance-non-increasing at  $x\in N$ if $\|df\|_x\leq 1$,  
where $df\colon TN \to TM$ is the tangent map.  As a special case of  the above theorem, we see that,  given a strictly convex domain with smooth boundary in $\mathbb R^n$, one cannot  increase the metric,  the scalar curvature and the mean curvature of its boundary simultaneously.  If we relax the condition ``$f$ is distance-non-increasing on $N$'' to ``$f$ is distance-non-increasing on $\partial N$'', then one can still conclude that $(N, \overbar g)$ is isometric to $(M, g)$ as smooth manifolds with boundary (cf. Theorem \ref{thm:ballAlmostIso} below). However, under such a weaker assumption, $f$ itself may not be an isometry. To obtain the stronger conclusion that $f$ is itself an isometry, one generally needs to require  $f$ to be distance-non-increasing on the whole $N$. In fact, as elementary examples show (cf. the discussion before the proof of Theorem \ref{thm:balliso} in Section \ref{sec:smooth}), there exists a degree one map $f\colon (\mathbb B^2, g_{st}) \to (\mathbb B^2, g_{st})$ such that $f$ is area-non-increasing\footnote{Here $f\colon N \to M$ is said to be area-non-increasing at  $x\in N$ if $\|\wedge^2df\|_x\leq 1$,  
	where $\wedge^2 df\colon \Bigwedge^2 TN \to \Bigwedge^2 TM$ is the map on two forms.} on $\mathbb B^2$ and $f$ equals the identity map  on $\partial \mathbb B^2$, but $f$ is not an isometry.  Here $\mathbb B^2$ is the standard unit Euclidean ball equipped with the standard Euclidean metric $g_{st}$. 

The paper is organized as follows. In Section \ref{sec:dihedral}, we introduce  a  notion of manifolds with polyhedral boundary, which is a class of manifolds that includes for example all polyhedra.    We prove the general form of Gromov's  flat corner domination conjecture for all flat manifolds with polyhedral boundary in all dimensions.  As a special case, we answer positively   Gromov's flat corner domination conjecture for convex Euclidean polyhedra in all dimensions (Theorem \ref{thm:polyhedra}). Consequently,  we obtain a positive solution to the Stoker conjecture for convex Euclidean polyhedra (Theorem \ref{thm:stokereuclidean}). In Section \ref{sec:smooth}, we prove a scalar-mean rigidity theorem for strictly convex Euclidean  domains (Theorem \ref{thm:balliso}). 

\vspace{.5cm}
\textbf{Acknowledgments.} We would like to thank Tian Yang and Bo Zhu for helpful comments. 

\section{Dihedral Rigidity of flat manifolds}
\label{sec:dihedral}
In this section, we prove Gromov's flat corner domination  conjecture for convex Euclidean polyhedra (Theorem \ref{thm:polyhedra}) and the Stoker conjecture for convex Euclidean polyhedra (Theorem \ref{thm:stokereuclidean}). 
\subsection{Manifolds with polyhedral boundary}
In this subsection, we introduce a notion of manifolds with polyhedral boundary. We also review the index theory on manifolds with polyhedral boundary  developed in \cite{Wang:2021tq}.

Recall that $n$-dimensional smooth manifolds with corners are locally modeled on $[0, \infty)^k\times \mathbb R^{n-k}$ with $0\leq k \leq n$. More precisely, let $M$ be a Hausdorff space. A chart $(U, \varphi)$ (possibly with corners) for $M$ is a homeomorphism $\varphi$ from an open subset $U$ of $M$ to an open subset of  $[0, \infty)^k\times \mathbb R^{n-k}$ for some $0\leq k \leq n$. Two charts $(U_1, \varphi_1)$ and $(U_2, \varphi_2)$ are $C^\infty$-related if either $U_1\cap U_2$ is empty or the map 
\[ 	\varphi_2\circ \varphi_1^{-1}\colon \varphi_1(U_1\cap U_2) \to \varphi_2(U_1\cap U_2) \]
is a diffeomorphism (of open subsets in $[0, \infty)^{k_1}\times \mathbb R^{n-k_1}$ and $[0, \infty)^{k_2}\times \mathbb R^{n-k_2}$). A system of pairwise $C^\infty$-related charts of $M$ that covers $M$ is called an atlas of $M$. A smooth manifold with corners is a Hausdorff space equipped with a maximal atlas of charts. 

Similarly, we introduce the following notion of manifolds with polyhedral  boundary, which are locally modeled on $n$-dimensional polyhedra in $\mathbb R^n$. For a given Hausdorff space $X$, a polytope chart $(U, \varphi)$  for $X$ is a homeomorphism $\varphi$ from an open subset $U$ of $M$ to an open subset of an $n$-dimensional polyhedron in $\mathbb R^n$.  Two polytope charts $(U_1, \varphi_1)$ and $(U_2, \varphi_2)$ are $C^\infty$-related if either $U_1\cap U_2$ is empty or the map 
\[ 	\varphi_2\circ \varphi_1^{-1}\colon \varphi_1(U_1\cap U_2) \to \varphi_2(U_1\cap U_2) \]
is a diffeomorphism (of open subsets of $n$-dimensional polyhedra). Again, a system of pairwise $C^\infty$-related charts of $X$ that covers $X$ is called an atlas of $X$. 
\begin{definition}\label{def:polytopeboundary}
	A smooth manifold with polyhedral  boundary is a Hausdorff space equipped with a maximal atlas of polytope charts. 
\end{definition}

A Riemannian manifold with polyhedral boundary is a smooth manifold with polyhedral boundary equipped with a smooth Riemannian metric.  A main difference between manifolds with corners and manifolds with polyhedral boundary is the following: for an $n$-dimensional manifold with corners, there can be at most $n$ codimension one faces meeting at a given point; while there may be more than $n$ codimension one faces meeting at a given point in an $n$-dimensional manifold with polyhedral boundary.

\begin{definition}
	Let $N$ be an $n$-dimensional manifold with polyhedral boundary. We define the codimension $k$ stratum of $N$ to be the set of interior points of all codimension $k$ faces of $N$. 
\end{definition}

For each point $x$ in the codimension $k$ stratum of $N$, it admits a small neighborhood $U$ of the form: 
\[ \R^{n-k} \times P\]
such that $P$ is a polyhedral corner in $\R^k$ enclosed by hyperplanes passing through the origin of $\R^k$ and $x$ is the origin of $\R^n$.  In this case, we call the partial derivatives along $\mathbb R^{n-k}$ the base directions of the neighborhood $U$ of $x$.

\begin{definition}\label{def:polytopeMap} A  map $f\colon (N,\overbar g)\to (M,g)$ between Riemannian manifolds with polyhedral boundary is called a \emph{polytope map} if 
	\begin{enumerate}
		\item 	$f$ is Lipschitz\footnote{Here $f\colon (N,\overbar g)\to (M,g)$ is said to be Lipschitz if there exists $C>0$ such that $d_M(f(x), f(y))\leq C\cdot d_N(x, y)$  for all points in $x, y\in N$.},
		\item $f$ is smooth away from the codimension three faces of $N$,
		\item $f$ maps the codimension $k$ stratum of $N$ to the  codimension $k$ stratum of $M$, and 
		\item every point $x$ in $N$ has a small open neighborhood $U$ such that $f$ is smooth with respect to the base directions on $U$.  
	\end{enumerate}
\end{definition}

\begin{remark}
	Condition (4) in the above definition of polytope maps is mainly added for technical reasons. It can certainly be weakened without affecting all the main results in the present paper. However, imposing condition (4) makes some of the proofs of this paper a little more transparent. In any case,  condition (4) is always satisfied  in the main geometric applications that we are concerned with. 
\end{remark}

We emphasize that a polytope map $f\colon N \to M$ is \emph{not} required to be smooth at the codimension three faces of $N$. Such a flexibility will be important when we consider $n$-dimensional polyhedral corners that have more than $n$ codimension one faces meeting at their vertices (e.g. when we prove the Stoker conjecture in Theorem \ref{thm:stokereuclidean}).  For example, let $N$ and $M$ be two convex polyhedra in $\R^n$ with the same combinatorial type. Then there is always a smooth polytope map $f\colon N\to M$. One can construct $f$ smoothly near each codimension $2$ edge of $N$, and inductively extend $f$ radially towards   higher codimension vertices.

Consider the vector bundle $f^*TM$ over $N$, which is equipped with the pull-back connection $f^*\nabla^M$ of the Levi--Civita connection on $M$. The smooth structure of $f^*TM$ is defined everywhere away from faces of codimension $\geq 2$. In particular, it makes sense to talk about the space of smooth sections of $f^\ast TM$ that vanishes near codimension two faces, which will be denoted by  $C^\infty_0(N,f^*TM)$. Moreover, the connection $f^*\nabla^M$ is well-defined away from codimension two  faces.

 We define
$H^1(N,f^*TM)$ to be the completion of $C^\infty_0(N,f^*TM)$ with respect to the the following Sobolev $H^1$-norm:
\begin{equation}\label{eq:sobolevh1} 
	\|s\|_1\coloneqq \big(\|s\|^2+\|\widetilde \nabla s\|^2\big)^{1/2}
\end{equation}
for $s\in C^\infty_0(N,f^*TM)$, where $\widetilde \nabla = f^\ast \nabla^M$. 
\begin{lemma}\label{lemma:H1space}
	The space $H^1(N,f^*TM)$  is independent (up to bounded isomorphisms of Hilbert spaces) of the metric on $TM$, and coincides with the usual $H^1$-space if $f$ is smooth.
\end{lemma}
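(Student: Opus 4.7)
The plan is to prove the two assertions separately: (a) independence of the bundle metric by a direct norm comparison, and (b) the identification with the usual Sobolev space when $f$ is smooth by a capacity-type cutoff argument. Throughout I take $N$ to be compact (as in the intended applications), so that $f(N) \subset M$ is compact as well by the Lipschitz hypothesis on $f$.

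For (a), let $g_1, g_2$ be two smooth Riemannian metrics on $M$, with Levi--Civita connections $\nabla^1, \nabla^2$. On the compact set $f(N)$ the two pointwise norms on $TM$ are uniformly equivalent, and pulling this equivalence back to $f^*TM$ yields equivalence of the $L^2$-parts of the two $H^1$-norms. The key remaining point is that the difference $A \coloneqq \nabla^1 - \nabla^2$ is a smooth $\mathrm{End}(TM)$-valued $1$-form on $M$, so that its pullback $f^*A$ is a bounded (merely $L^\infty$, since $f$ is only Lipschitz) $\mathrm{End}(f^*TM)$-valued $1$-form on $N$. The identity
\[
f^*\nabla^1 s - f^*\nabla^2 s = (f^*A) \cdot s
\]
on $C^\infty_0(N, f^*TM)$ then shows that the two $H^1$-norms differ by an $L^2$-bounded perturbation, hence are equivalent; passing to completions gives the bounded isomorphism.

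For (b), when $f$ is smooth the connection $f^*\nabla^M$ extends smoothly across the codimension-two stratum $Z \subset N$, so the usual $H^1$-space is the completion of all of $C^\infty(N, f^*TM)$ under the same $H^1$-norm. The task reduces to showing that $C^\infty_0(N, f^*TM)$ is dense in $C^\infty(N, f^*TM)$ in the $H^1$-norm. For this I would construct, for each $\epsilon>0$, a cutoff $\chi_\epsilon \colon N \to [0,1]$ vanishing in a neighborhood of $Z$, equal to $1$ outside a slightly larger neighborhood, and satisfying $\|d\chi_\epsilon\|_{L^2(N)} \to 0$ as $\epsilon \to 0$. The standard logarithmic choice
\[
\chi_\epsilon = \phi\!\left(\frac{\log(r/\epsilon^2)}{\log(1/\epsilon)}\right),
\]
where $r$ is the distance to $Z$ and $\phi\colon \mathbb{R}\to[0,1]$ is a fixed smooth function with $\phi=0$ on $(-\infty,0]$ and $\phi=1$ on $[1,\infty)$, does the job: the codimension-two tube estimate $\mathrm{vol}\bigl(\{r\leq t\}\bigr)=O(t^2)$ gives $\|d\chi_\epsilon\|_{L^2}^2 = O(1/\log(1/\epsilon)) \to 0$. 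For $s \in C^\infty(N, f^*TM)$, the product $\chi_\epsilon s$ then lies in $C^\infty_0$, and a routine product-rule estimate using boundedness of $s$ and $\widetilde\nabla s$ on compact $N$ shows $\chi_\epsilon s \to s$ in $H^1$.

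The main technical obstacle is making the capacity argument rigorous on the stratified set $Z$. The top stratum of $Z$ is $(n-2)$-dimensional, but deeper strata appear along intersections of three or more codimension-one faces, and the distance function $r$ is only Lipschitz. The uniform tube estimate $\mathrm{vol}(\{r\leq t\})=O(t^2)$ must be verified using the local polyhedral normal form implicit in Definition \ref{def:polytopeboundary}, and the cutoff must be assembled globally via a partition of unity subordinate to polytope charts. Higher-codimension strata create no additional trouble, since their tube volumes are strictly smaller than $t^2$ and are thus dominated by the contribution from the top stratum.
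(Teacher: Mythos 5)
Your proof is correct and follows essentially the same route as the paper. Part (a) expresses invariantly, via the endomorphism-valued $1$-form $A = \nabla^1 - \nabla^2$, what the paper carries out in local coordinates (uniform boundedness of the pulled-back Christoffel symbols and of the components of $df$), and part (b) supplies the logarithmic-cutoff capacity argument behind the paper's appeal to the standard fact that removing a codimension-$\geq 2$ set does not change the Sobolev $H^1$-space.
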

\begin{proof}
	Let  $\{U_\alpha\}$ be an open cover of $N$ consisting of polytope charts such that  $TN$ is trivial on $U_\alpha$ and $TM$ is trivial on $f(U_\alpha)$. Let $\{\phi_\alpha\}$ be a smooth partition of unity subordinate to $\{U_\alpha\}$. Set $m = \dim M$. For each $s\in C^\infty_0(N,f^*TM)$,  we may view $s_\alpha\coloneqq \varphi_\alpha s$ with a smooth function from $N$ to $\R^m$ after we identify $f^\ast TM $ with a trivial bundle over $N$. More precisely, we choose a smooth orthonormal basis $\{e_i\}$ of $TM$ over $f(U_\alpha)$. Then $s_\alpha$ is uniquely written as
	$$s_\alpha=\sum_{i=1}^n s^i_\alpha e_i$$
	where $s^i_\alpha$'s  are smooth functions vanishes near codimension two  faces.
	
	Let $\Gamma_{ij}^k$ be the Christoffel symbols of the Levi-Civita connection on $TM$, that is, 
	$$\nabla^M_{e_i}e_j=\sum_{k=1}^n\Gamma_{ij}^ke_k.$$
	Let $\{X_1,\cdots,X_n\}$ be a  orthonormal basis of $TN$ over $U_\alpha$. Then we may write  
	$$f_*X_i=\sum_{j=1}^n x^j_ie_j,$$
	where $x_i^j$'s are bounded functions over $M$ (since $f$ is Lipschitz) and  smooth in the interior of $M$ (since $f$ is smooth away from codimension two faces). It follows that 
	$$\widetilde \nabla_{X_i}s_{\alpha}=\sum_{j=1}^nX_i(s_\alpha^j) \cdot e_j+\sum_{j,k=1}^n x_i^j\Gamma_{ji}^k e_k.$$
	Since $x_i^j$ and $\Gamma_{ji}^k$ are uniformly bounded over $U_\alpha$, it is  not difficult to see that the  Sobolev $H^1$ norm from line \eqref{eq:sobolevh1} is equivalent to the following norm: 
	$$\|s_\alpha\|_{new}^2\coloneqq \sum_{i=1}^n\|s_\alpha^i\|^2+\sum_{i=1}^n\|\grad(s_\alpha^i)\|^2,$$
	where the latter is independent of the choice of the metric on $M$. Together with  the partition of unity $\{\phi\alpha\}$ subordinate to $\{U_\alpha\}$, it follows that $H^1(N,f^*TM)$ is independent of the metric on $TM$, up to bounded isomorphisms of Hilbert spaces.
	
	Now assume $f$ is smooth. Then $f^*TM$ is a smooth vector bundle over the entire $N$. Recall that  removing a subspace of codimension $\geq 2$ does not affect the definition of Sobolev $H^1$ spaces. In particular,   the space of smooth sections that vanish near codimension two faces of $N$ is dense in the usual $H^1$-space, where the usual  $H^1$-space is the completion of all smooth sections over $N$ (that do not necessarily vanish near codimension two faces of $N$). This finishes the proof.
\end{proof}

The proof of Lemma \ref{lemma:H1space} shows that locally the $H^1$-norm from \eqref{eq:sobolevh1} is equivalent to the usual $H^1$-norm of vector-valued functions. In particular, it follows that the inclusion $H^1(N,f^*TM) \to L^2(N,f^*TM)$ is an compact operator. Moreover, there is a bounded trace map $H^1(N,f^*TM) \to H^{1/2}(N, f^\ast TM)$.

As we will work with Dirac type operators, let us recall the following definition of spin maps. 
\begin{definition}\label{def:spin}
	A map $f\colon N \to M$ is said to be  a spin map if the second Stiefel--Whitney classes of $TM$ and $TN$ are related by 
	\[  w_2(TN) = f^\ast(w_2(TM)).\]
	Equivalently, $f\colon N \to M$ is a spin map   if $TN\oplus f^\ast TM$ admits a spin structure.
\end{definition}

Now we assume that both $N$ and $M$ are even dimensional Riemannian manifolds with polyhedral  boundary and $f\colon N\to M$ is a spin polytope map. The odd dimensional case is completely similar, or alternatively may be reduced to the even dimensional case by taking the direct product with the unit interval.   The Riemannian metric on $TM$ pulls back to a (continuous) Riemannian metric on $f^\ast TM$ over $N$. In particular, the bundle $TN\otimes f^*TM$ over $N$ admits a natural Riemannian metric.  Let $S_N\otimes f^*S_M$ be the spinor bundle of $TN\otimes f^*TM$, which exists as $f$ is a spin map. Let  $\nabla$ be the associated Riemannian spinor connection on $S_N\otimes f^*S_M$, which is well-defined at least away from the codimension two faces of $N$. More precisely, away from codimension two faces of $N$, we have
$$\nabla=\nabla^{S_N}\otimes 1+1\otimes f^*(\nabla^{S_M}),$$
where $\nabla^{S_N}$ and $\nabla^{S_M}$ are the Levi--Civita connection on $S_N$ and $S_M$, respectively. 
Let $D$ be the Dirac operator on $S_N\otimes f^*S_M$ given by
$$D=\sum_{i=1}^n\overbar c(\overbar e_i)\nabla_{\overbar e_i},$$
where $\{\overbar e_i\}$ is a local orthonormal frame of $TN$. Clearly, $D$ is well-defined away from the codimension two faces of $N$. Note that a small open neighborhood of a point $x$ in the interior of  a codimension $k$ face  is homeomorphic to a fiber bundle $W\times \fiber$, where $\fiber$ is  some Euclidean polyhedral corner.  Since $f$ is a polytope map, we see that $D$ is fiberwise asymptotically conical, that is, the fiberwise Dirac operator $D^\fiber$ along each fiber $\fiber_y$ is asymptotically conical, cf. \cite[Section 3]{Wang:2021tq}. Also see  \cite[RS4]{BruningSeeley} and \cite[Section 1]{Bruning}.

Note that $\nabla$ on $S_N \otimes f^\ast S_M$ is the spinorial connection induced by the connection $\nabla^M\oplus f^*(\nabla^M)$ on $TN\oplus f^*TM$. By Lemma \ref{lemma:H1space}, the associated $H^1$-space is also well-defined and  (up to bounded isomorphisms) independent of the metric on $M$. 

 Let $\overbar \epsilon$ and $\epsilon$ are the grading operators of $S_N$ and $f^\ast S_M$ respectively. Let $B$ be the boundary condition on $S_N\otimes f^*S_M$ over each codimension one face $\overbar F_i$ of $N$ given by
\begin{equation}\label{eq:boundarycondition}
	(\overbar\epsilon\otimes\epsilon)(\overbar c(\overbar e_n)\otimes c(e_n))\varphi=-\varphi
\end{equation}
for all smooth sections $\varphi$ of $S_N\otimes f^\ast S_M$ over $N$, where $\overbar e_n$ is the unit inner normal vector of $\overbar F_i$ in $N$ and $e_n$ is the unit inner normal vector of the corresponding face  $F_i$ in $M$. 
\begin{definition}\label{def:smooth,H1} \mbox{}
	\begin{enumerate}
		\item Let $C^\infty_0(N,S_N\otimes f^*S_M;B)$ be the collection of smooth sections that satisfies the boundary condition $B$ at each codimension one face and vanishes near all faces with codimension $\geq 2$.
		\item Let $H^1(N,S_N\otimes f^*S_M;B)$ be the completion of  $C^\infty_0(N,S_N\otimes f^*S_M;B)$ with respect to  the $H^1$-norm
		$$\|\varphi\|_1\coloneqq (\|\varphi\|^2+\|\nabla\varphi\|^2)^{1/2}.$$
	\end{enumerate}
\end{definition}

One of the main results of  \cite{Wang:2021tq} is the following theorem.   
\begin{theorem}[{\cite[Theorem 1.6]{Wang:2021tq}}]\label{thm:previous}
	Suppose $(M,g)$ is a compact oriented $n$-dimensional submanifold with polyhedral boundary such that
\begin{enumerate}[label=$(\alph*)$]
	\item $M$ has nonzero Euler characteristic,
	\item the curvature form of $M$ is non negative, and each of its codimension one face has non-negative second fundamental form, and 
	\item all of its dihedral angles are $ < \pi$.
\end{enumerate}   
Let $(N, \overbar{g})$ be a compact oriented $n$-dimensional spin manifold with polyhedral boundary. If $f\colon (N, \overbar{g}) \to (M, g)$  is a spin polytope map such that	
\begin{enumerate}[label=$(\arabic*)$]
	\item  $f$ is area-non-increasing on $N$  and distance-non-increasing on $\partial N$, 
	\item the degree of $f$ is nonzero,	
	\item 
	$\Sc(\overbar{g})_x \geq \Sc(g)_{f(x)}$ for all $x\in N$, 
	\item $H_{\overbar{g}}(\overbar{F}_i)_y \geq  H_{g}(F_i)_{f(y)}$ for all $y$ in each codimension one face\footnote{The notation $\overbar{F}_i$ and $F_i$ means that the map $f$ takes the face $\overbar{F}_i$ of $N$ to the face $F_i$ of $M$.} $\overbar{F}_i$ of $N$, and
	\item $\theta_{ij}(\overbar{g})_z\leq \theta_{ij}(g)_{f(z)}$ for all $\overbar F_i, \overbar F_j$ and all $z \in \overbar F_i\cap \overbar{F}_j$,   
\end{enumerate}
then we have 
 \begin{enumerate}
	\item 
   $\Sc(\overbar{g})_x = \Sc(g)_{f(x)} = 0$ for all $x\in N$, 
   \item $H_{\overbar{g}}(\overbar{F}_i)_y =  H_{g}(F_i)_{f(y)}$ for all $y$ in each codimension one face $\overbar{F}_i$ of $N$, 
   \item $\theta_{ij}(\overbar{g})_z =  \theta_{ij}(g)_{f(z)}$ for all $\overbar F_i, \overbar F_j$ and all $z \in \overbar F_i\cap \overbar{F}_j$. 
\end{enumerate}
Furthermore,\footnote{For the last conclusion, see Claim 7.1 in the proof of \cite[Theorem 1.7]{Wang:2021tq}.} there exists a non-zero parallel section $\varphi\in H^1(N,S_N\otimes f^*S_M;B)$, i.e., $\nabla\varphi=0$.
\end{theorem}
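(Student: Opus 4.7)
The plan is to prove this by a twisted spinor/Dirac-operator argument adapted to polyhedral corners, combining a Lichnerowicz--Schrödinger identity with an index computation. First I would work on the twisted spinor bundle $S_N\otimes f^*S_M$ equipped with the tensor-product connection $\nabla=\nabla^{S_N}\otimes 1+1\otimes f^*\nabla^{S_M}$ and the Dirac operator $D=\sum \overbar c(\overbar e_i)\nabla_{\overbar e_i}$ already introduced in the excerpt, and impose the local boundary condition $B$ from \eqref{eq:boundarycondition} on each codimension-one face. The key algebraic input is the twisted Lichnerowicz identity
\begin{equation*}
D^2=\nabla^*\nabla+\frac{\Sc(\overbar g)}{4}+\mathcal{R}^{f^*S_M},
\end{equation*}
where the curvature endomorphism $\mathcal{R}^{f^*S_M}$ is built from the pullback of the Riemann tensor of $M$. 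Because the curvature operator of $M$ is assumed non-negative and $f$ is area-non-increasing, a standard algebraic estimate bounds $|\langle\mathcal{R}^{f^*S_M}\varphi,\varphi\rangle|$ by $\frac{1}{4}\Sc(g)_{f(x)}|\varphi|^2$, so the interior potential $\frac{1}{4}\Sc(\overbar g)+\mathcal{R}^{f^*S_M}\geq 0$ under hypothesis $(3)$.

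Next I would carry out integration by parts on $N$ in a manner compatible with the polyhedral boundary, being careful to work first on the complement of an $\epsilon$-neighborhood of the codimension $\geq 2$ stratum so that $\nabla$ and $D$ behave classically. On each codimension-one face $\overbar F_i$, the chosen boundary condition $B$ together with the usual Bochner boundary identity produces a boundary potential proportional to $H_{\overbar g}(\overbar F_i)-H_g(F_i)\circ f$, which is $\geq 0$ by hypothesis $(4)$. On the codimension-two edges, the compatibility of $B$ on the two adjacent faces requires the two Clifford reflections to be compatible up to a rotation; the hypothesis $\theta_{ij}(\overbar g)\leq\theta_{ij}(g)\circ f$ together with the convex-corner condition in $M$ (dihedral angles $<\pi$) is exactly what ensures a non-positive edge contribution, so after letting $\epsilon\to 0$ one obtains
\begin{equation*}
\|D\varphi\|^2 \;\geq\; \|\nabla\varphi\|^2+\int_N\!\left(\tfrac{\Sc(\overbar g)}{4}+\mathcal{R}^{f^*S_M}\right)|\varphi|^2+\sum_i\int_{\overbar F_i}\!\alpha_i\,|\varphi|^2,
\end{equation*}
where each $\alpha_i\geq 0$ is controlled by the difference of mean curvatures. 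This step requires the fiberwise asymptotically conical analysis at the corners mentioned in the excerpt, which is the main technical obstacle: one must verify that $(D,B)$ is essentially self-adjoint and Fredholm on $H^1(N,S_N\otimes f^*S_M;B)$, and that the above identity extends to the whole $H^1$-space.

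The second half of the argument is an index calculation. Using a relative/deformation argument one shows the Fredholm index of $(D,B)$ equals, up to a universal non-zero constant, $\deg(f)\cdot\chi(M)$: one model case is when $N=M$ and $f=\id$, for which the index reduces to the Euler characteristic via the spin--Euler formalism of $S\otimes S\cong \Lambda^\ast$; the polyhedral boundary condition $B$ is arranged to be the correct analogue of absolute/relative boundary conditions used in the Gauss--Bonnet theorem. Since $\chi(M)\neq 0$ and $\deg(f)\neq 0$ by hypotheses $(a)$ and $(2)$, the kernel of $(D,B)$ contains a non-zero section $\varphi$. Feeding $\varphi$ into the identity above gives
\begin{equation*}
0=\|\nabla\varphi\|^2+(\text{non-negative interior, face, and edge terms}),
\end{equation*}
so each term vanishes individually. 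Vanishing of the interior term forces $\Sc(\overbar g)\equiv\Sc(g)\circ f\equiv 0$ pointwise where $\varphi\neq 0$, and a unique-continuation/parallelism argument (since $\nabla\varphi=0$ and $\varphi\not\equiv 0$) propagates this to all of $N$; vanishing of the face terms forces the mean-curvature equalities, and vanishing of the edge terms forces the dihedral-angle equalities. The existence of the parallel section $\varphi$ in $H^1(N,S_N\otimes f^*S_M;B)$ is recorded as the final conclusion. The hardest step is unquestionably the analytical setup at the corners and the identification of the index with $\deg(f)\cdot\chi(M)$; once the Fredholm/index machinery is in place, the Bochner--Lichnerowicz argument is relatively mechanical.
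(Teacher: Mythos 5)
Your proposal reconstructs, at the level of strategy, exactly the argument of \cite{Wang:2021tq}, which is what the present paper invokes: this theorem is imported (Theorem 1.6 and Claim 7.1 of \cite{Wang:2021tq}) rather than proved anew here, and Section~2.1 only recalls the twisted bundle $S_N\otimes f^*S_M$, the boundary condition $B$, the fiberwise conical Fredholm theory, and the $H^1$-framework that your outline also uses. The Lichnerowicz identity with the Goette--Semmelmann-type estimate on $\mathcal R^{f^*S_M}$, the boundary potential given by the mean-curvature gap, the role of the dihedral-angle hypothesis at codimension-two strata, and the index identification with $\deg(f)\cdot\chi(M)$ (up to the spinorial constant, using $S\otimes S\cong\Bigwedge^\ast$ in the model case $f=\id$) are all the same ingredients.

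Two small inaccuracies worth flagging. First, the dihedral-angle hypothesis does not enter as a genuinely separate ``non-positive edge integral'' in the final energy inequality: its role is to ensure that the boundary condition $B$ on adjacent faces is compatible in the limit so that the corner contribution in the integration by parts vanishes (equivalently, that $(D,B)$ is essentially self-adjoint/Fredholm in $H^1(N,S_N\otimes f^*S_M;B)$), which is what makes the limiting identity with only face integrals valid; casting it as a separate signed edge term slightly misrepresents the mechanism, though it does not change the conclusion. Second, no unique-continuation argument is needed to propagate the pointwise equalities: since $\nabla$ is a metric connection and $\nabla\varphi=0$ with $\varphi\not\equiv 0$, the norm $|\varphi|$ is a nonzero constant, so $\varphi$ is nowhere vanishing and the rigidity identities hold at every point directly.
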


\subsection{Gromov's flat corner domination conjecture}\label{sec:main-sub}
In this subsection, we prove  the following theorem, which answers positively  Gromov's flat corner domination conjecture \cite[Section 3.18]{Gromov4lectures2019}.
\begin{theorem}\label{thm:strongflat}
	Suppose $M$ is a compact oriented $n$-dimensional flat submanifold with polyhedral boundary in $\R^n$ with the flat metric $g$  such that
	\begin{enumerate}[label=$(\alph*)$]
		\item $M$ has nonzero Euler characteristic,
		\item each of its codimension one face is convex, that is, its second fundamental form is non-negative, 
		\item all of its dihedral angles are $ < \pi$,
	\end{enumerate}   
Let $(N, \overbar{g})$ be a compact oriented $n$-dimensional spin manifold with polyhedral boundary. If $f\colon (N, \overbar{g}) \to (M, g)$  is a  polytope  map such that	
\begin{enumerate}[label=$(\arabic*)$]
		\item  $f$ is distance-non-increasing on $\partial N$, 
	\item the degree of $f$ is nonzero,	
	\item 
	$\Sc(\overbar{g})_x \geq \Sc(g)_{f(x)} = 0$ for all $x\in N$, 
	\item $H_{\overbar{g}}(\overbar{F}_i)_y \geq  H_{g}(F_i)_{f(y)}$ for all $y$ in each codimension one face $\overbar{F}_i$ of $N$, 
	\item $\theta_{ij}(\overbar{g})_z\leq \theta_{ij}(g)_{f(z)}$ for all $\overbar F_i, \overbar F_j$ and all $z \in \overbar F_i\cap \overbar{F}_j$,   
\end{enumerate}
then 
\begin{enumerate}
	 \item 
	$\Sc(\overbar{g})_x = \Sc(g)_{f(x)} = 0$ for all $x\in N$, 
	\item $H_{\overbar{g}}(\overbar{F}_i)_y =  H_{g}(F_i)_{f(y)}$ for all $y$ in each codimension one face $\overbar{F}_i$ of $N$, 
	\item $\theta_{ij}(\overbar{g})_z =  \theta_{ij}(g)_{f(z)}$ for all $\overbar F_i, \overbar F_j$ and all $z \in \overbar F_i\cap \overbar{F}_j$,
\end{enumerate} 
and $(N, \overbar g)$ is also flat. Furthermore, the following hold. 
\begin{enumerate}[label=$(\roman*)$]
	\item  If a codimension one face $F_i$ of $M$ is flat, then the corresponding face $\overbar F_i$ of $N$ is also flat.
	\item Suppose that $\overbar x$ is a point in the intersection of $\ell$ codimension one faces of $N$ whose unit inner normal vectors at $x$ are denoted by $\overbar \nu_1,\ldots,\overbar \nu_\ell$. Let $\nu_1, \ldots, \nu_\ell$ be the unit inner normal vectors at $f(\overbar x)$ of the corresponding faces of $M$. Then we have 
	\begin{equation}\label{eq:linear}
\langle \nu_i, \nu_j\rangle_g=\langle \overbar \nu_i,\overbar \nu_j\rangle_{\overbar g},~\forall i,j=1,\ldots,\ell.
	\end{equation} 
\item Suppose that $\overbar x_1,\overbar x_2\in \partial N$ and $f(\overbar x_1),f(\overbar x_2)\in \partial M$ are in the interior of codimension one faces. Denote their corresponding unit inner normal vectors  by $\nu_{\overbar x_1}$, $\nu_{\overbar x_2}$, $\nu_{f(\overbar x_1)}$ and $\nu_{f(\overbar x_2)}$. Then we have
$$\langle \widetilde{\nu}_{\overbar x_1},\nu_{\overbar x_2}\rangle=\langle \widetilde \nu_{f(\overbar x_1)},\nu_{f(\overbar x_2)}\rangle,$$
where $\widetilde{\nu}_{\overbar x_1}$ is the parallel transport of $\nu_{\overbar x_1}$ from $\overbar x_1$ to $\overbar x_2$ along any piecewise smooth path, and $\widetilde{\nu}_{f(\overbar x_1)}$ is the parallel transport\footnote{In fact, we have $ \widetilde{\nu}_{f(\overbar x_1)} =\nu_{f(\overbar x_1)} \in \mathbb R^n$, since $(M, g)$ is assumed to be a submanifold of $\mathbb R^n$.} of $\nu_{f(\overbar x_1)}$ from $f(\overbar x_1)$ to $f(\overbar x_2)$ along any piecewise smooth path.
\end{enumerate}

\end{theorem}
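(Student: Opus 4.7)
The plan is to adapt the Dirac-operator argument underlying Theorem \ref{thm:previous} to the present flat setting, and then to post-process the resulting parallel spinor to extract the additional rigidity statements (i)--(iii).

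First, I set up the Dirac operator $D$ on the twisted spinor bundle $S_N\otimes f^*S_M$ with the boundary condition $B$ from \eqref{eq:boundarycondition}. The key observation is that when $M$ is flat, the Lichnerowicz formula simplifies: the curvature term reduces to $\Sc(\overbar{g})/4\geq 0$, because the contribution $f^*R^M$ (which in Theorem \ref{thm:previous} had to be dominated using the area-non-increasing hypothesis) vanishes identically. Consequently, the area-non-increasing condition on the interior of $N$ is superfluous here, and hypothesis (1) together with (4) already signs the codimension-one boundary term from the integration by parts, while the codimension-two dihedral contributions are handled under (5) exactly as in \cite{Wang:2021tq}. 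The index computation of Theorem \ref{thm:previous}, using $\deg(f)\neq 0$ and $\chi(M)\neq 0$, then yields a non-zero $\varphi\in\ker D\subset H^1(N,S_N\otimes f^*S_M;B)$; collapsing the resulting chain of non-negative terms forces $\nabla\varphi=0$, $\Sc(\overbar{g})=0$, $H_{\overbar{g}}(\overbar{F}_i)=H_g(F_i)$, and $\theta_{ij}(\overbar{g})=\theta_{ij}(g)$, establishing (1)--(3).

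To derive flatness of $(N,\overbar{g})$ and claim (i), I use the triviality of $f^*S_M$: since $M\subset\mathbb R^n$ is flat, $f^*TM$ is a globally trivial flat bundle, so $f^*S_M$ admits a global parallel frame $\{e_\alpha\}$. Writing $\varphi=\sum_\alpha \sigma_\alpha\otimes e_\alpha$, each $\sigma_\alpha$ becomes a parallel section of $S_N$. The second-order identity $R^{\nabla}\varphi=0$ then reduces to $R^{TN}\cdot\varphi=0$, and an averaging argument over the $\mathbb R^n$-family of flat spinors $\{e_\alpha\}$ -- equivalently, a sharper index count that exploits the triviality of the twist -- produces a fiberwise-spanning set of parallel $\sigma_\alpha$'s, which forces $R^{TN}=0$ pointwise and hence flatness of $(N,\overbar{g})$. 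For (i): if $F_i$ is flat, then $\nu_i$ is parallel along $F_i$ in $\mathbb R^n$, so the boundary identity $B_i\varphi=-\varphi$ along $\overbar{F}_i$, combined with parallelness of $\varphi$ and the just-established flatness of $N$, forces $\overbar{\nu}_i$ to be parallel along $\overbar{F}_i$, which is equivalent to flatness of $\overbar{F}_i$.

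Claims (ii) and (iii) reduce to Clifford-algebraic identities on $\varphi$. At a point $\overbar{x}\in\bigcap_{k=1}^\ell\overbar{F}_{i_k}$, set $B_k=(\overbar{\epsilon}\otimes\epsilon)(\overbar{c}(\overbar{\nu}_k)\otimes c(\nu_k))$, so that $B_k\varphi=-\varphi$ for every $k$. The pairwise relation $B_jB_k\varphi=B_kB_j\varphi=\varphi$, together with the Clifford anti-commutation $\overbar{c}(v)\overbar{c}(w)+\overbar{c}(w)\overbar{c}(v)=-2\langle v,w\rangle_{\overbar{g}}$ and its analogue for $c$, extracts -- after symmetrizing in $j,k$ -- the scalar identity $\langle\overbar{\nu}_j,\overbar{\nu}_k\rangle_{\overbar{g}}\varphi=\langle\nu_j,\nu_k\rangle_g\varphi$, which gives (ii) as soon as $\varphi(\overbar{x})\neq 0$. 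Claim (iii) follows from the same computation performed at $\overbar{x}_2$, with parallelness of $\varphi$ and flatness of $(N,\overbar{g})$ making the parallel transport of $\overbar{\nu}_{\overbar{x}_1}$ and $\nu_{f(\overbar{x}_1)}$ path-independent and compatible across $f$. The main obstacle I expect is the flatness step: a single parallel spinor on $N$ does not force flatness in general (Calabi--Yau metrics are the standard obstruction), so the argument must essentially use the full $\mathbb R^n$-parametrized family coming from triviality of $f^*S_M$; a related subtlety is proving non-vanishing of $\varphi$ at the codimension-$\ell$ corners where the Clifford identities are applied, presumably via a unique-continuation estimate near the singular strata.
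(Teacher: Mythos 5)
Your overall strategy (apply Theorem \ref{thm:previous} to obtain a parallel $\varphi \in H^1(N, S_N\otimes f^*S_M; B)$, then post-process it) is the right one, and you correctly observe that the area-non-increasing hypothesis in Theorem \ref{thm:previous} becomes superfluous when $M$ is flat, since it was only used to dominate the pull-back curvature of $M$ in the Lichnerowicz formula. Your treatment of (ii) is also valid and slightly more direct than the paper's: from the two boundary conditions, written as $(\overbar\epsilon\overbar c(\overbar\nu_j)\otimes 1)\varphi = -(1\otimes\epsilon c(\nu_j))\varphi$, you can compose the $j$ and $k$ relations (using that $\overbar\epsilon\overbar c(\overbar\nu_j)\otimes 1$ commutes with $1\otimes\epsilon c(\nu_k)$) to get $(\overbar c(\overbar\nu_j)\overbar c(\overbar\nu_k)\otimes 1)\varphi = (1\otimes c(\nu_j)c(\nu_k))\varphi$; symmetrizing in $j,k$ turns both sides into scalars $\langle\overbar\nu_j,\overbar\nu_k\rangle_{\overbar g}$ and $\langle\nu_j,\nu_k\rangle_g$, giving \eqref{eq:linear}. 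The paper instead evaluates the boundary conditions on arbitrary linear combinations $\overbar\nu_a=\sum a_j\overbar\nu_j$ and polarizes $|\overbar\nu_a|^2_{\overbar g}=|\nu_a|^2_g$; the two routes are equivalent. Note, however, that the non-vanishing of $\varphi$ at corners is immediate from parallelness ($|\varphi|$ is a constant function) plus the continuous extension of $\varphi$ across codimension-$\geq 2$ strata that the paper establishes at the start of the proof, so no unique-continuation estimate is needed.

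The genuine gap in your proposal is the flatness step, and you flagged it yourself. Writing $\varphi = \sum_\alpha\sigma_\alpha\otimes e_\alpha$ over a global parallel frame $\{e_\alpha\}$ of $f^*S_M$ does give parallel sections $\sigma_\alpha$ of $S_N$, but nothing in this decomposition forces them to span $S_N$ fiberwise — they could all be proportional. Your proposed fixes (``averaging over the $\mathbb R^n$-family'' or ``a sharper index count'') are not arguments; indeed a single parallel twisted spinor, even with a trivial flat twist, does not imply flatness (your Calabi--Yau remark is exactly the right caution). The paper's resolution, which is the new idea your proposal is missing, is to manufacture the full $2^n$-dimensional parallel frame directly: set $\varphi_\lambda = (1\otimes c(\omega_\lambda))\varphi$ for $\omega_\lambda = \bigwedge_{i\in\lambda}v_i$ ranging over all subsets $\lambda\subseteq\{1,\dots,n\}$, where $\{v_i\}$ is the standard parallel frame of $TM\subset\mathbb R^n$. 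Mutual orthogonality of the $\varphi_\lambda$ is then established \emph{globally}: for any $k$ distinguishing $\lambda$ from $\mu$, the linear function $y\mapsto\langle y,v_k\rangle$ on the compact flat domain $M\subset\mathbb R^n$ attains a minimum on $\partial M$, at which $v_k$ lies in the span of the inner normals; pulling back to a preimage $\overbar x\in\partial N$ (which exists since $\deg f\neq 0$ forces $f$ to be surjective on each stratum) and applying the boundary conditions there shows $\langle\varphi_\lambda(\overbar x),\varphi_\mu(\overbar x)\rangle = 0$, hence $\langle\varphi_\lambda,\varphi_\mu\rangle\equiv 0$ since both are parallel. With $2^n$ orthogonal parallel sections in hand, $S_N\otimes f^*S_M$ has vanishing curvature, and since $f^*S_M$ is flat this forces $R^{S_N}=0$, i.e.\ $R^{\overbar g}=0$. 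This global min/max argument, exploiting that $M$ is a codimension-zero flat submanifold of $\mathbb R^n$ together with the surjectivity of $f$ on boundary strata, is the mechanism that produces the spanning family — it is not a local or an index-theoretic consequence.
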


\begin{remark}
    The conclusion in the case of manifolds with polyhedral boundary carries is geometrically more significant than  the case of manifolds with corners.  Let us consider  part $(ii)$ of the above theorem for example. For  an $n$-dimensional manifold $Y$ with corners,  each vertex  of $Y$ is the intersection of precisely $n$ codimension one faces. These faces pairwise intersect, hence the equalities in line \eqref{eq:linear} simply become the equalities of corresponding dihedral angles in the case of manifolds with corners.  However, if $N$ is an $n$-dimensional manifold with polyhedral boundary, then it is possible for a point $x\in N$ to lie  in the intersection of many more than $n$ codimension one faces of $N$, where $n = \dim N$. In this case,  the equalities in line \eqref{eq:linear} imply that not only the equalities of  corresponding dihedral angles, but also the equalities of all other corresponding angles between non-adjacent faces.  
\end{remark}
\begin{remark}
	It will be clear from the proof  that the conclusion in part $(ii)$ and part $(iii)$ of Theorem \ref{thm:strongflat} also hold under the assumption  of Theorem \ref{thm:previous}.
	\end{remark}

\begin{remark}
We point out that, for a given codimension one face $\overbar F$ of $N$, if the corresponding face $F$ of $M$ is flat,  then  the condition (1)  for requiring $f$ to be distance-non-increasing  on $\overbar F$ is \emph{not} needed. This follows from a standard estimate involving the second fundamental form of $F$. See  for example  \cite[Lemma 2.3]{Wang:2021tq}. Also see the proof of Proposition \ref{prop:strictlyConvex} for a similar computation. 
\end{remark}

\begin{proof}[Proof of Theorem \ref{thm:strongflat}]
The odd dimensional case can easily be reduced to the even dimensional case by considering $N\times [0,1]$ and $M\times[0,1]$ with the obvious product metrics. Therefore, we can assume without loss of generality  that $N$ and $M$ are even-dimensional.

Let $S_N$ and $S_M$ be the corresponding spinor bundle over $N$ and $M$. By Theorem \ref{thm:previous}, there exists a non-zero section $\varphi\in H^1(N,S_N\otimes f^*S_M;B)$ such that $\nabla\varphi=0$. It follows that $\varphi$ is smooth away from codimension $2$ faces. Moreover,  $\varphi$ extends continuously everywhere over $N$ in the following sense. For each $x\in N$ on a face with codimension $\geq 2$, consider a local chart near $x$ so that sections of $S_N\otimes f^*S_M$ near $x$ is identified with a vector in $\R^{2^n}$, cf. Lemma \ref{lemma:H1space}. Let $\gamma$ be a path on $N$ that $\gamma(t)$ lies in the interior of $N$ except $\gamma(0)=x$. Then the parallel transport of sections along $\gamma$ gives rise to ordinary differential equations in $t$, whose coefficients are uniformly bounded by assumption. In particular, since $\varphi$ is parallel, it satisfies the differential equations. Therefore $\varphi$ restricted on $\gamma$, as a function with value in $\R^{2^n}$, is Lipschitz continuous for $t>0$. Hence $\varphi$ along $\gamma$ extends continuously to $\gamma(0)=x$, which we will call $\varphi(x)$. The value $\varphi(x)$ is independent of the choice of $\gamma$.

Since $(M, g)$ is  a codimension zero flat submanifold of $\R^n$, there exist $n$ parallel sections  of $TM$, denoted by $\{v_1,v_2,\ldots,v_n \}$,  such that they form an orthonormal basis of $T_xM$ at every point $x\in M$. We also use the same notation to denote the standard basis in $\R^n$. Let $\Lambda$ be the collection of all subsets of $\{1,2,\ldots,n\}$. For $\lambda\in \Lambda$, we define
$$\omega_\lambda=\wedge_{i\in \lambda}v_i\in \Bigwedge^\ast TM$$
Note that $\{\omega_\lambda \}_{\lambda\in\Lambda}$ are parallel sections of $\Bigwedge^\ast TM$ such that they form an orthonormal basis of $\Bigwedge^\ast T_xM$ at every point $x\in M$.

With the section $\varphi$ of $S_N\otimes f^\ast S_M$ from above, we define
$$\varphi_\lambda=(1\otimes c(\omega_\lambda))\varphi.$$
Since $\omega_\lambda$ is parallel (with respect to the connection $\nabla$), we see that $\varphi_\lambda$ is parallel (with respect to the connection $\overbar \nabla \otimes 1 + 1\otimes \nabla$). Note that $\overbar\nabla\otimes 1+1\otimes\nabla$ is a Hermitian connection that preserves the inner product on $S_N\otimes f^\ast S_M$. Therefore, for any pair of elements $\lambda, \mu\in \Lambda$, the function  $\langle\varphi_\lambda(x),\varphi_\mu(x)\rangle$ (as $x$ varies over $N$) is a constant function. In particular, we may assume without loss of generality  that $|\varphi_\lambda(x)|=1$ for all $x\in N$ and all $\lambda\in \Lambda$.

\begin{claim*}
	The parallel sections $\{\varphi_\lambda\}_{\lambda\in\Lambda}$ are mutually orthogonal.
\end{claim*}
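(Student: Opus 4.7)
The plan is to reduce the orthogonality claim to a pure Clifford-algebra trace computation. My first step will be to use the canonical Hermitian isomorphism $S_N\otimes f^\ast S_M\cong \mathrm{Hom}(\overbar{S_N}, f^\ast S_M)$ to view $\varphi$ as a bundle map $\phi\colon \overbar{S_N}\to f^\ast S_M$; since $\varphi$ is parallel, so is $\phi$, and therefore so is the positive parallel endomorphism $Q\coloneqq \phi\phi^\ast\in\Endo(f^\ast S_M)$. Under the same identification, inner products on $S_N\otimes f^\ast S_M$ become Hilbert--Schmidt pairings, and the Clifford identity $c(\omega_\lambda)^\ast c(\omega_\mu)=\pm c(\omega_{\lambda\triangle\mu})$ yields $\langle\varphi_\lambda,\varphi_\mu\rangle=\pm\tr\bigl(c(\omega_{\lambda\triangle\mu})Q\bigr)$. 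Hence the claim is equivalent to showing $\tr\bigl(c(\omega_\nu)Q\bigr)=0$ for every nonempty $\nu\in\Lambda$.

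Next, I will extract information from the boundary condition \eqref{eq:boundarycondition}. Setting $A_j=\overbar\epsilon\,\overbar c(\overbar\nu_j)$ and $B_j=\epsilon\, c(\nu_j)$ (both unitary involutions), the condition $(A_j\otimes B_j)\varphi=-\varphi$ translates into an intertwining relation $B_j\phi=-\phi\widetilde A_j$ with $\widetilde A_j$ a unitary involution on $\overbar{S_N}$. Taking the adjoint and multiplying gives $B_j Q B_j=Q$, so that $Q$ commutes with $B_j$ along $\overbar F_j$. Because $M$ is a flat submanifold of $\R^n$, the spinor bundle $f^\ast S_M$ admits a global parallel trivialization, and in this trivialization the parallel section $Q$ is represented by a single constant operator on $S_M$. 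Thus $Q$ commutes with $\epsilon c(\nu_j(z))$ for every codimension one face $F_j$ of $M$ and every point $z\in F_j$.

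The main obstacle, as I see it, is the following Schur-type step: verifying that the collection $\{\epsilon c(\nu_j(z))\}$ generates the full endomorphism algebra $\Endo(S_M)$. Here I will use that $M$ is full-dimensional and convex, so that the inner unit normals along $\partial M$ span $\R^n$ and hence $\{c(\nu_j(z))\}$ generate the complex Clifford algebra $\cl_n$. The identity $\epsilon c(u)\cdot \epsilon c(v)=-c(u)c(v)$ shows that even-length products in the generators recover the entire even Clifford subalgebra, while odd-length products reintroduce $\epsilon$; together they exhaust $\cl_n\cong\Endo(S_M)$. Granting this, Schur's lemma forces $Q=cI$ for some constant $c>0$.

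Finally, substituting $Q=cI$ yields $\tr\bigl(c(\omega_\nu)Q\bigr)=c\cdot\tr(c(\omega_\nu))=0$ for every nonempty $\nu\in\Lambda$, since the trace of a nontrivial Clifford monomial on the irreducible spin representation vanishes. Combined with the reduction in the first paragraph, this gives $\langle\varphi_\lambda,\varphi_\mu\rangle=0$ for all $\lambda\neq\mu$, proving the claim.
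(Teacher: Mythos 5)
Your proposal is correct, and it takes a genuinely different, more algebraic route than the paper. The paper picks a specific direction $v_k$ separating $\lambda$ from $\mu$, uses compactness to find a boundary point of $M$ where $v_k$ lies in the cone spanned by the inner normals, and then reads off the orthogonality of $\varphi_\lambda$ and $\varphi_\mu$ from the opposite signs in the boundary conditions at a single preimage of that point (with two cases depending on the codimension of the stratum where the minimum sits). You instead aggregate the boundary conditions over \emph{all} of $\partial N$: passing to $Q=\phi\phi^\ast$ and flatly trivializing $f^\ast S_M$, every boundary point contributes a unitary involution $\epsilon\,c(\nu)$ commuting with the constant operator $Q$, these generate $\Endo(S_M)\cong\cl_n$ once the inner normals span $\R^n$, and Schur's lemma forces $Q=cI$, after which all the orthogonality relations drop out simultaneously from $\tr(c(\omega_\nu))=0$ for $\nu\neq\emptyset$. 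Each approach buys something: the paper's pointwise, case-by-case argument is elementary (no Hom-bundle identification, no Schur) and localizes cleanly, which is why it transfers verbatim to the generalized-vertex setting of Theorem \ref{thm:flatWithVertex}; your argument is more conceptual and disposes of all pairs $\lambda\neq\mu$ at once. One small inaccuracy in your justification: $M$ is \emph{not} assumed convex in Theorem \ref{thm:strongflat} (only its codimension-one faces and dihedral angles are constrained); the fact that the inner normals span $\R^n$ instead follows from compactness of $M$ by minimizing a linear functional, exactly as the paper does in Cases I and II, so your Schur step is still justified.
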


Note that $\dim(S_N\otimes f^\ast S_M)=2^n=|\Lambda|$, where $|\Lambda|$ is the cardinality of the set $\Lambda$.  Thus if the claim holds, then the curvature form of $S_N\otimes f^\ast S_M$ vanishes. Since $M$ is flat, the curvature form of $S_N\otimes f^\ast S_M$ is equal to the curvature form $R^{S_N}$ of $S_N$. By \cite[Theorem 2.7]{spinorialapproach}, we have
$$R^{S_N}_{X,Y}\sigma=\frac 1 2 R^{\overbar g}_{X,Y}\cdot \sigma,~ \textup{ for all } \sigma\in\Gamma(S_N) \textup{ and } X,Y\in\Gamma(TN),$$
where $R^{\overbar g}$ is the curvature form of the Levi-Civita connection on $TN$ with respect to $\overbar g$. It follows that $R^{\overbar g}=0$, that is, $\overbar g$ is flat.

Now we prove the claim. For each $\lambda \in \Lambda$ and $x\in M$, we denote by $V_\lambda$ the subspace in $T_xM \cong \mathbb R^n$ spanned by $\{v_i \}_{i\in\lambda}$.

Let $\lambda$ and $\mu$ be two distinct members of $\Lambda$.  Without loss of generality, we assume there exists $1\leq k \leq n$ such that $k\in \mu$ and $k\notin \lambda$. Equivalently, we have $v_k\in V_\lambda^\perp\cap V_\mu$. Consider the linear function $L(y) = \langle y,v_k\rangle$ on $M$ (viewed as a subspace of $\mathbb R^n$), which attains its minimum at some point $x\in M$, since $M$ is compact. Note that  $f\colon \partial N \to \partial M$ is surjective, since $\deg(f)\ne 0$. In particular, there is a point $\overbar x\in \partial N$ such that $f(\overbar x)=x$. Now there are two cases to consider. 

\textbf{Case I.} If $x$ is in the interior of a codimension one face $F_i$, then the unit inner normal vector $u$ of $F_i$ at $x$ is equal to $v_k$.  In this case, let $\overbar u$ be the unit inner normal vector at $\overbar x$ of the corresponding face $\overbar F_i$ of $N$.  Recall that the section $\varphi$ satisfies the following boundary condition at $\overbar x$:
$$(\overbar \epsilon\otimes\epsilon)(\overbar c(\overbar u)\otimes c(u))\varphi(\overbar x)=-\varphi(\overbar x).$$
Note that for a vector $v\in\R^n$, we have
$$c(\omega_\lambda)c(v)=\begin{cases}
	(-1)^{|\lambda|}c(v) c(\omega_\lambda),& \textup{ if } v\in V_\lambda^\perp, \vspace{.3cm}  \\
	(-1)^{|\lambda|-1}c(v) c(\omega_\lambda),& \textup{ if } v \in V_\lambda,
\end{cases}$$
where $|\lambda|$ is the cardinality of the set $\lambda$. Therefore $\varphi_\lambda$ and $\varphi_\mu$ satisfy the following equations at $\overbar x$: 
$$(\overbar \epsilon\otimes\epsilon)(\overbar c(\overbar u)\otimes c(u))\varphi_\lambda(\overbar x)=-\varphi_\lambda(\overbar x), $$
$$(\overbar \epsilon\otimes\epsilon)(\overbar c(\overbar u)\otimes c(u))\varphi_\mu(\overbar x)=\varphi_\mu(\overbar x).$$
It follows that  $\langle\varphi_\lambda,\varphi_\mu\rangle$ vanishes at $\overbar x$, hence everywhere on $N$.

\textbf{Case II.} Suppose $x$ lies in the interior of the intersection of  $m$ codimension one faces. Let $u_1,\ldots,u_m$ be the set of unit inner normal vectors of these $m$ codimension one faces. Then in this case, the vector $v_k$ from above lies in the linear span of $u_1,\ldots,u_m$ in $T_xM$. Since $\deg(f) \neq 0$, it follows from the definition of corner maps that there exists $\overbar x \in f^{-1}(x)$ such that $\overbar x$  lies in the intersection of $m$ codimension one faces of $N$. Indeed, $\deg(f) \neq 0$ implies that the map $f|_{\partial}\colon \partial N \to \partial M$ has nonzero degree. Since $f$ maps codimension $k$ faces of $N$ to codimension $k$ faces of $M$ for any $1\leq k \leq n$, it follows by induction that for each codimension $m$ face $F_\theta$ of $M$, there exists a codimension $m$ face $\overbar F_\theta$ of $N$ such that $f$ maps $\overbar F_\theta$ to $F_\theta$ with nonzero degree. This  in particular implies that $f$ maps $\overbar F_\theta$ surjectively onto $F_\theta$. To summarize, we see that there exists $\overbar x \in f^{-1}(x)$ such that $\overbar x$  lies in the intersection of $m$ codimension one faces of $N$.  We denote the corresponding unit inner normal vectors of these faces at $\overbar x$ by $\overbar u_1,\ldots,\overbar u_m$. Since $v_k$ lies in the linear span of $u_1,\ldots,u_m$, we have
$$v_k=\sum_{i=1}^m a_iu_i$$
for some numbers $a_1,\ldots,a_m\in \mathbb R$. Accordingly, we define
$$\overbar u\coloneqq \sum_{i=1}^m a_i\overbar u_i.$$
At the point $\overbar x$, the section $\varphi$ satisfies multiple boundary conditions, that is,
$$(\overbar \epsilon\otimes\epsilon)(\overbar c(\overbar u_i)\otimes c(u_i))\varphi(\overbar x)=-\varphi(\overbar x),~\forall i=1,\ldots m.$$
Equivalently, we have
$$(\overbar \epsilon\overbar c(\overbar u_i)\otimes 1)\varphi(\overbar x)=-(1\otimes \epsilon c(u_i))\varphi(\overbar x),~\forall i=1,\ldots m.$$
Since $(\overbar \epsilon\overbar c(\overbar u_i)\otimes 1)$ commutes with $1\otimes c(\omega_\lambda)$ and $1\otimes c(\omega_\mu)$, we have
$$(\overbar \epsilon\overbar c(\overbar u)\otimes 1)\varphi_\lambda(x)=-(1\otimes \epsilon c(v_k))\varphi_\lambda(\overbar x),$$
and
$$(\overbar \epsilon\overbar c(\overbar u)\otimes 1)\varphi_\mu(x)=(1\otimes \epsilon c(v_k))\varphi_\mu(\overbar x).$$
Note that  $\overbar c(\overbar u)(-\overbar c(\overbar u)^*)=\overbar c(\overbar u)^2=|\overbar u|^2_{\overbar g}$ and similarly $ c(v_k)(- c(v_k)^*) = |v_k|^2_{g} = 1$. It follows that 
$$|\overbar u|^2_{\overbar g}\langle\varphi_\lambda(\overbar x),\varphi_\mu(\overbar x)\rangle=-\langle\varphi_\lambda(\overbar x),\varphi_\mu(\overbar x)\rangle.$$
Since $|\overbar u|^2_{\overbar g}\geq 0$, this implies that  $\langle\varphi_\lambda(\overbar x),\varphi_\mu(\overbar x)\rangle=0$, hence
$\langle\varphi_\lambda,\varphi_\mu\rangle = 0$ everywhere on $N$. This proves the claim, hence shows that $(N, \overbar g)$ is flat. 

Now we shall prove part $(i)$, that is, the following claim. 
\begin{claim*}
	If a face $F_i$ of $M$ is flat, then the corresponding face $\overbar F_i$ in $N$ is also flat.
\end{claim*}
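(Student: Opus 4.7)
The plan is to differentiate the boundary condition satisfied by the parallel section $\varphi$ along tangent directions of $\overbar F_i$, exploiting the flatness of $F_i$ to show that the second fundamental form of $\overbar F_i$ in $(N,\overbar g)$ vanishes. Combined with the already established flatness of $(N,\overbar g)$, the Gauss equation will then yield that the induced metric on $\overbar F_i$ is flat.

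First, since $F_i$ is a flat codimension one face of the Euclidean domain $(M,g)$, its unit inner normal $u$ is a constant unit vector in $\R^n$. Pulling back along $f$, the resulting section $u$ of $f^*TM$ is parallel in every direction tangent to $\overbar F_i$, so $1\otimes c(u)$ commutes with $\nabla_X$ for every $X\in T\overbar F_i$; the grading operator $\overbar\epsilon\otimes\epsilon$ is parallel as well. Because $\varphi$ is smooth away from codimension two faces, the boundary identity
\[(\overbar\epsilon\otimes\epsilon)(\overbar c(\overbar u)\otimes c(u))\varphi=-\varphi\]
holds pointwise on $\interior{\overbar F_i}$, and one may apply $\nabla_X$ to both sides. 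Using $\nabla\varphi=0$, only one term survives, giving
\[(\overbar\epsilon\otimes\epsilon)\bigl(\overbar c(\overbar\nabla_X\overbar u)\otimes c(u)\bigr)\varphi=0.\]
As $\overbar\epsilon\otimes\epsilon$ and $1\otimes c(u)$ are bundle isomorphisms (the latter because $c(u)^2=|u|^2_g=1$), this reduces to
\[\bigl(\overbar c(\overbar\nabla_X\overbar u)\otimes 1\bigr)\varphi=0\quad\text{on }\interior{\overbar F_i}.\]

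To conclude, I use that $\varphi$ is nowhere vanishing: the orthonormal family $\{\varphi_\lambda\}_{\lambda\in\Lambda}$ includes $\varphi_\emptyset=\varphi$, so $|\varphi|\equiv 1$ on $N$. If $\overbar\nabla_X\overbar u$ were nonzero at some $\overbar x\in\interior{\overbar F_i}$, then $\overbar c(\overbar\nabla_X\overbar u)$ would be invertible on $S_N|_{\overbar x}$ (since $\overbar c(Y)^2=|Y|^2_{\overbar g}>0$ whenever $Y\neq 0$), forcing $\varphi(\overbar x)=0$ and contradicting $|\varphi|=1$. Hence $\overbar\nabla_X\overbar u=0$ for every tangential $X$ on $\interior{\overbar F_i}$, which is precisely the vanishing of the second fundamental form of $\overbar F_i$ in $(N,\overbar g)$. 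Since $(N,\overbar g)$ is flat, the Gauss equation then gives that the intrinsic curvature of $\overbar F_i$ vanishes on $\interior{\overbar F_i}$, and by continuity on all of $\overbar F_i$. The main technical point is the justification of the pointwise differentiation of the boundary condition on $\interior{\overbar F_i}$; this is handled by the smoothness of $\varphi$ away from codimension two strata (inherited from the parallel equation together with elliptic regularity for the local Dirac boundary problem) and the smoothness of $\overbar u$ on the interior of $\overbar F_i$.
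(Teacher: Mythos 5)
Your proof is correct and follows essentially the same route as the paper: differentiate the boundary condition $(\overbar\epsilon\otimes\epsilon)(\overbar c(\overbar u)\otimes c(u))\varphi=-\varphi$ along directions tangent to $\overbar F_i$, using that $e_n$ is parallel (since $F_i$ is flat) and $\nabla\varphi=0$, to get $\overbar c(\overbar\nabla_X\overbar u)$ annihilating the relevant spinor data, hence the second fundamental form of $\overbar F_i$ vanishes and flatness of $(N,\overbar g)$ gives the claim. The only (harmless) deviation is the last step: the paper decomposes $\varphi=\sum_\alpha\overbar s_\alpha\otimes s_\alpha$ in a parallel orthonormal frame of $f^*S_M$ and uses the pointwise linear independence of $\{\overbar s_\alpha\}$, whereas you conclude more directly from the invertibility of Clifford multiplication by a nonzero vector together with the nonvanishing of the parallel section $\varphi$.
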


 Let $\{s_\alpha\}_{1\leq \alpha \leq 2^{n/2}}$ be a set of parallel sections of $f^\ast S_M$ such that they form an orthonormal basis of $(f^\ast S_M)_x$ for any point $x\in N$. Hence we can write
$$\varphi=\sum_{\alpha}\overbar s_\alpha\otimes s_\alpha,$$
where $\overbar s_\alpha$ are sections of $S_N$. Since $\varphi$ is parallel, each $\overbar s_\alpha$ is parallel with respect to $\overbar \nabla$.

From the above, we see that 
$$\{\varphi_\lambda=\sum_\alpha \overbar s_\alpha\otimes \omega_\lambda s_\alpha\}_{\lambda\in \Lambda}$$
 forms a basis of $(S_N\otimes f^\ast S_M)_{\overbar x}$ at every point $\overbar x\in N$. It follows that  
\[ \{\overbar s_\alpha\}_{1\leq \alpha \leq 2^{n/2}} \] forms a basis of $(S_N)_{\overbar x}$ at every $\overbar x\in N$. That is, $\{\overbar s_\alpha \}$ is linearly independent at any point in $N$.

At the face $\overbar F_i$ of $N$, the section $\varphi$ satisfies the boundary condition
$$(\overbar \epsilon\overbar c(\overbar e_n)\otimes 1)\varphi=-(1\otimes\epsilon c(e_n))\varphi,$$
where $\overbar e_n$ (resp. $e_n$) is the inner unit normal vector field of $\overbar F_i$ (resp.  the corresponding codimension one face $F_i$ in $M$). Therefore we have 
\begin{equation*}
	\sum_\alpha \overbar \epsilon\overbar c(\overbar e_n)\overbar s_\alpha\otimes s_\alpha=\sum_\alpha \overbar s_\alpha\otimes \epsilon c(e_n) s_\alpha.
\end{equation*}
 Note that $e_n$ is parallel. Let $X$ be an arbitrary tangent vector field along $\overbar F_i$. By applying $(\overbar\nabla_X\otimes 1+1\otimes\nabla_X)$ to both sides of the above equality, we obtain
 $$\sum_\alpha \overbar \epsilon\overbar c(\overbar \nabla_X\overbar e_n)\overbar s_\alpha\otimes s_\alpha=0.$$
 Therefore $c(\overbar \nabla_X\overbar e_n)\overbar s_\alpha=0$ for all $\alpha \in \{1,2, \ldots,2^{n/2}\}$. It follows that $\overbar \nabla_X\overbar e_n=0$ for all tangent vector fields $X$ along $\overbar F_i$, that is, the second fundamental form of $\overbar F_i$ vanishes. As we have shown that $N$ is flat, this implies that $\overbar F_i$ is also flat. 
 
 Now let us prove part $(ii)$.  By assumption,  $\overbar x$ is a point in the intersection of $m$ codimension one faces of $N$ whose unit inner normal vectors at $\overbar x$ are denoted by $\overbar \nu_1,\ldots,\overbar \nu_m$. Let $\nu_1, \ldots, \nu_m$ be the unit inner normal vectors at $f(\overbar x)$ of the corresponding faces of $M$. Again, let $\varphi$ be  the parallel section of $S_N\otimes f^\ast S_M$ from above. By the above discussion, we have
 $$(\overbar \epsilon\overbar c(\overbar \nu_j)\otimes 1)\varphi(\overbar x)=-(1\otimes \epsilon c(\nu_j))\varphi(\overbar x),~\forall j=1,\ldots m.$$
 For $a=(a_1,a_2,\ldots,a_m)\in\R^m$, we define
 $$\nu_a\coloneqq \sum_{j=1}^m a_j\nu_j \textup{ and } \overbar \nu_a\coloneqq \sum_{j=1}^m a_j\overbar \nu_j.$$
 Clearly, we have 
 $$(\overbar \epsilon\overbar c(\overbar \nu_a)\otimes 1)\varphi(\overbar x)=-(1\otimes \epsilon c(\nu_a))\varphi(\overbar x).$$
 By taking vector norms of both sides, we obtain
 $$|\overbar \nu_a|_{\overbar g}^2\cdot |\varphi(\overbar x)|^2=|\nu_a|_g^2\cdot |\varphi(\overbar x)|^2$$
 since $c(\nu_a)(-c(\nu_a)^*)=c(\nu_a)^2=|\nu_a|_{g}^2$ and $\overbar c(\overbar \nu_a)(-\overbar c(\overbar \nu_a)^*)=\overbar c(\overbar \nu_a)^2=|\overbar \nu_a|_{\overbar g}^2$. It follows that 
 $|\overbar \nu_a|_{\overbar g}=|\nu_a|_g$, since $|\varphi(\overbar  x)|\ne 0$. 
 
 Consider the two symmetric quadratic forms
 $$\overbar Q,Q\colon \R^m\times\R^m\to\R$$
 defined by
 $$\overbar Q(a,b)\coloneqq \langle \overbar \nu_a,\overbar \nu_b\rangle_{\overbar g} \textup{ and } Q(a,b)\coloneqq \langle \nu_a,\nu_b\rangle_{g}.$$
 The above discussion shows that 
 $$Q(a,a)=\overbar Q(a,a),~\forall a\in\R^m.$$
 By the polarization identity, we have
 $$Q(a,b)=\frac 1 4(Q(a+b,a+b)-Q(a-b,a-b)),$$
 $$\overbar Q(a,b)=\frac 1 4(\overbar Q(a+b,a+b)-\overbar Q(a-b,a-b)).$$
 Hence $Q$ and $\overbar Q$ are identical. In particular, we have 
 $$\langle \nu_i,\nu_j\rangle_g=\langle \overbar \nu_i,\overbar \nu_j\rangle_{\overbar g},~\forall i,j=1,\ldots,m.$$
 
 Now let us prove part $(iii)$. Note that the proof of part $(ii)$ essentially relies on the fact that the section $\varphi$ satisfies multiple boundary conditions at a singular point of the boundary. We shall prove part $(iii)$ in a similar way, by  applying the fact that $\varphi$ satisfies the boundary conditions at any two different points $\overbar x_1,\overbar x_2\in \partial N$. More precisely,  suppose that $\overbar x_1,\overbar x_2\in \partial N$ and $f(\overbar x_1),f(\overbar x_2)\in \partial M$ are in the interior of codimension one faces. Denote their corresponding unit inner normal vectors  by $\nu_{\overbar x_1}$, $\nu_{\overbar x_2}$, $\nu_{f(\overbar x_1)}$ and $\nu_{f(\overbar x_2)}$.  Let $\gamma\colon [0,1]\to N$ be a piecewise smooth path from $\overbar x_1$ to $\overbar x_2$ in $N$ and $f\gamma$ be its image in $M$. Let $\widetilde \nu$ be the parallel transport of $\nu_{\overbar x_1}$ along $\gamma$, that is, $\widetilde\nu(0)=\nu_{\overbar x_1}$ and $\ncon_{\dot\gamma} \widetilde\nu=0$, where  $\ncon$ is the Levi--Civita connection on $N$. We define  $\widetilde\nu_{\overbar x_1}\coloneqq \widetilde\nu(1)$. 
 Similarly, let $\widetilde \nu_{f(\overbar x_1)}$ be the parallel transport of $ \nu_{f(\overbar x_1)}$ from $f(\overbar x_1)$ to $f(\overbar x_2)$ via the path $f\gamma$. 
 
 Since $\varphi$ satisfies the boundary condition at $\overbar x_1$, we have
 $$\big(\overbar\epsilon\overbar c(\nu_{\overbar x_1})\otimes 1\big)\varphi(\overbar x_1)= -\big(1\otimes \epsilon c(\nu_{f(\overbar x_1)})\big)\varphi(\overbar x_1).$$
 The parallel transport of both sides with respect to $\nabla$ on $S_N\otimes f^*S_M$ gives the following equation
 $$\big(\overbar\epsilon\overbar c(\widetilde \nu_{\overbar x_1})\otimes 1\big)\varphi(\overbar x_2)= -\big(1\otimes \epsilon c(\widetilde \nu_{f(\overbar x_1)})\big)\varphi(\overbar x_2)$$
 at $\overbar x_2$, 
since $\varphi$ is parallel. Moreover, 
$\varphi$ also satisfies the boundary condition at $\overbar x_2$
  $$\big(\overbar\epsilon\overbar c(\nu_{\overbar x_2})\otimes 1\big)\varphi(\overbar x_2)= -\big(1\otimes \epsilon c(\nu_{f(\overbar x_2)})\big)\varphi(\overbar x_2).$$
  Now we apply the same argument  in the proof of part $(ii)$ by considering linear combinations of the above two equations at $\overbar x_2$. It follows that 
 $$\langle \widetilde{\nu}_{\overbar x_1},\nu_{\overbar x_2}\rangle=\langle \widetilde \nu_{f(\overbar x_1)},\nu_{f(\overbar x_2)}\rangle.$$ 
 This finishes the proof.	

\end{proof}

Observe that in the proof of Theorem \ref{thm:strongflat} above, if there is a point $x$ of $M$ such that the inner normal vectors (of codimension one faces) at $x$ span the whole tangent space $T_xM$, then we can deduce the flatness of $(N, \overbar g)$ by only using these inner normal vectors. This leads us to  the following notion of generalized vertices in manifolds with polyhedral boundary. 
\begin{definition}
	Let $M$ be an $n$-dimensional flat manifold with polyhedral boundary. For a given $x$ in $\partial M$, let $U$ be a neighborhood of $x$ in $\partial M$.  We denote by  $\mathcal L_U$ the set of vectors in $T_xM$ consisting of parallel transports of inner normal vectors of $\partial M $ at points $y\in U$.  We say the point $x$ is a \emph{generalized vertex} if the linear span of $\mathcal L_U$ is $T_xM$.
\end{definition}
For example, a usual vertex of any Euclidean polyhedron is clearly a generalized vertex in the sense of the above definition.

The following theorem generalizes Theorem \ref{thm:strongflat} to a class of flat manifolds that admit generalized vertices.  
 
\begin{theorem}\label{thm:flatWithVertex}
		Suppose $M$ is a compact oriented $n$-dimensional flat manifold with polyhedral boundary such that 
	\begin{enumerate}[label=$(\alph*)$]
		\item $M$ has nonzero Euler characteristic,
		\item each of its codimension one face is convex, that is, its second fundamental form is non-negative, 
		\item all of its dihedral angles are $< \pi$,
		\item $M$ admits a generalized vertex. 
	\end{enumerate}   
	Let $(N, \overbar{g})$ be a compact oriented $n$-dimensional spin manifold with polyhedral boundary. If $f\colon (N, \overbar{g}) \to (M, g)$  is conically smooth map such that 
	\begin{enumerate}[label=$(\arabic*)$]
		\item $f$ is distance-non-increasing on $\partial N$, 
		\item the degree of $f$ is nonzero,	
		\item 
		$\Sc(\overbar{g})_x \geq \Sc(g)_{f(x)} = 0$ for all $x\in N$, 
		\item $H_{\overbar{g}}(\overbar{F}_i)_y \geq  H_{g}(F_i)_{f(y)}$ for all $y$ in each codimension one face $\overbar{F}_i$ of $N$, 
		\item $\theta_{ij}(\overbar{g})_z\leq \theta_{ij}(g)_{f(z)}$ for all $\overbar F_i, \overbar F_j$ and all $z \in \overbar F_i\cap \overbar{F}_j$,   
	\end{enumerate}
	then 
\begin{enumerate}
	 \item 
	$\Sc(\overbar{g})_x = \Sc(g)_{f(x)} = 0$ for all $x\in N$, 
	\item $H_{\overbar{g}}(\overbar{F}_i)_y =  H_{g}(F_i)_{f(y)}$ for all $y$ in each codimension one face $\overbar{F}_i$ of $N$, 
	\item $\theta_{ij}(\overbar{g})_z =  \theta_{ij}(g)_{f(z)}$ for all $\overbar F_i, \overbar F_j$ and all $z \in \overbar F_i\cap \overbar{F}_j$, 
\end{enumerate}
and $(N, \overbar g)$ is also flat. 
	Furthermore, the following hold. 
	\begin{enumerate}[label=$(\roman*)$]
		\item If a codimension one face $F_i$ of $M$ is flat, then the corresponding face $\overbar F_i$ of $N$ is also flat.
		\item Suppose in addition that the preimage $f^{-1}(F_i)$ of each codimension one face $F_i$ of $M$ is equal to a codimension one face of $N$. If all codimension one faces of $M$ are flat, then  the manifold $(N, \overbar g)$ at each point $\overbar x\in N$ is locally isometric to $(M, g)$ at $f(\overbar x)\in M$. 
	\end{enumerate}
\end{theorem}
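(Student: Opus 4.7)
The plan is to follow the strategy of the proof of Theorem~\ref{thm:strongflat}, with two main modifications tailored to the weakened hypothesis that $M$ is only flat (rather than a codimension zero submanifold of $\R^n$): in place of a global parallel orthonormal frame of $TM$ we use a \emph{local} one, available on any simply connected neighborhood $U\subset M$ of the generalized vertex $x_0$ since $M$ is flat; and the role played in Case II of Theorem~\ref{thm:strongflat} by minimizing a linear function on $M\subset\R^n$ is taken over by the generalized vertex condition itself.

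The first step is to invoke Theorem~\ref{thm:previous} (applicable as in Theorem~\ref{thm:strongflat}, since the area-non-increasing hypothesis is not needed when $M$ is flat) to produce a nonzero parallel section $\varphi\in H^1(N,S_N\otimes f^*S_M;B)$. Fix a preimage $\overbar x_0\in \partial N$ of $x_0$ (which exists because $\deg f\ne 0$ and $f$ maps boundary strata surjectively in each codimension), choose a simply connected neighborhood $U\subset M$ of $x_0$ with parallel orthonormal frame $v_1,\dots,v_n$ of $TM|_U$, and form the local parallel spinors $\varphi_\lambda:=(1\otimes c(\omega_\lambda))\varphi$ on $f^{-1}(U)$, where $\omega_\lambda=\wedge_{i\in\lambda}v_i$. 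The main step is to prove that $\{\varphi_\lambda(\overbar x_0)\}_{\lambda\in \Lambda}$ are mutually orthogonal; this will force $\{\varphi_\lambda\}$ to be a parallel orthonormal frame of $S_N\otimes f^*S_M$ near $\overbar x_0$, hence the curvature of $S_N\otimes f^*S_M$ vanishes there and $\overbar g$ is flat near $\overbar x_0$. To prove this orthogonality, I would write $v_k(x_0)=\sum_i a^k_i\widetilde\nu_{y_i}$ as a linear combination of parallel transports (in $M$) of inner normals $\nu_{y_i}$ at boundary points $y_i$ near $x_0$, which is possible precisely because $x_0$ is a generalized vertex. Choosing preimages $\overbar y_i\in\partial N$ and paths $\overbar\gamma_i$ from $\overbar y_i$ to $\overbar x_0$, and then parallel transporting the boundary condition $(\overbar\epsilon \overbar c(\overbar\nu_{\overbar y_i})\otimes 1)\varphi(\overbar y_i)=-(1\otimes \epsilon c(\nu_{y_i}))\varphi(\overbar y_i)$ along $\overbar\gamma_i$ using the product connection on $S_N\otimes f^*S_M$ (under which $\varphi$ is parallel), one obtains
\[
(\overbar\epsilon\overbar c(\overbar v_k)\otimes 1)\varphi(\overbar x_0)=-(1\otimes \epsilon c(v_k(x_0)))\varphi(\overbar x_0),
\]
where $\overbar v_k:=\sum_i a^k_i\widetilde{\overbar\nu}_{y_i}\in T_{\overbar x_0}N$. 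From this identity the Case II computation in the proof of Theorem~\ref{thm:strongflat} applies verbatim (using that $1\otimes c(\omega_\lambda)$ commutes or anticommutes with the two Clifford factors depending on whether $k\in\lambda$) to give $\langle\varphi_\lambda(\overbar x_0),\varphi_\mu(\overbar x_0)\rangle=0$ for $\lambda\ne\mu$.

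The remaining steps are routine. To extend flatness from a neighborhood of $\overbar x_0$ to all of $N$, I would cover $N$ by open sets of the form $f^{-1}(U_\alpha)$ with $U_\alpha\subset M$ simply connected, construct local spinor frames $\{\varphi^\alpha_\lambda\}$ as above, and observe that on overlaps they are related by constant spin-lifted orthogonal transition matrices (since the parallel frames of $TM$ on $U_\alpha\cap U_\beta$ are so related); the orthonormal basis property at $\overbar x_0$ therefore propagates along any path in $N$, so the curvature of $S_N\otimes f^*S_M$, and hence $\overbar g$, vanishes everywhere. Conclusions $(1)$--$(3)$ and part $(i)$ then follow exactly as in the proof of Theorem~\ref{thm:strongflat}. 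For part $(ii)$, once $(N,\overbar g)$ and each codimension one face $\overbar F_i$ are flat (by part $(i)$, since each corresponding $F_i$ of $M$ is flat by hypothesis), the analog of part $(ii)$ of Theorem~\ref{thm:strongflat} applied pointwise at each $\overbar x\in\partial N$ yields that the Gram matrix of inner normal vectors at $\overbar x$ matches that at $f(\overbar x)$; combined with the preimage assumption ensuring matching combinatorial structure, this pins down the local isometry type of the corner at $\overbar x$ and gives the desired local isometry. The main obstacle I anticipate is the orthogonality argument at $\overbar x_0$: one must carefully keep track of the parallel transport of boundary conditions along paths in $N$ and their images in $M$, and verify that this transport is compatible with $\varphi$ being parallel for the full product connection $\overbar\nabla\otimes 1+1\otimes \nabla$ on $S_N\otimes f^*S_M$.
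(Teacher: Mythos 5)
Your overall strategy follows the paper's proof of Theorem~\ref{thm:flatWithVertex} quite closely: invoke Theorem~\ref{thm:previous} to obtain a parallel section $\varphi$, use the generalized vertex $x_0$ to produce enough boundary conditions at a preimage $\overbar x_0$ to force mutual orthogonality of the sections $\varphi_\lambda=(1\otimes c(\omega_\lambda))\varphi$, conclude that $S_N\otimes f^*S_M$ has a parallel orthonormal frame, and deduce flatness. The one place you genuinely diverge from the paper is in handling the fact that $M$ need not be simply connected. The paper first proves the theorem under the extra assumption that $M$ is simply connected, so that the parallel frame $\{w_i\}$ of $TM$ extends globally and all parallel transports are path-independent, and then lifts $f$, the bundle and $\varphi$ to the universal cover $\widetilde M$ (with $\widetilde N$ the pullback cover) in the general case. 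You instead keep the frame local on a simply connected $U\ni x_0$ and propagate the orthonormality across charts $f^{-1}(U_\alpha)$ using the constant orthogonal transition matrices of the local parallel frames.

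There is a subtle gap in your orthogonality step that the universal-cover device is precisely designed to avoid. When you parallel transport the boundary identity $(\overbar\epsilon\,\overbar c(\overbar\nu_{\overbar y_i})\otimes 1)\varphi(\overbar y_i)=-(1\otimes\epsilon c(\nu_{y_i}))\varphi(\overbar y_i)$ along $\overbar\gamma_i$ to $\overbar x_0$, the vector appearing in the $f^*S_M$-factor at $\overbar x_0$ is the parallel transport of $\nu_{y_i}$ along the image path $f(\overbar\gamma_i)$ in $M$, \emph{not} automatically the transport used to define $\mathcal L_U$ in the notion of a generalized vertex (which is transport along short paths inside a small $U$). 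If $M$ has nontrivial holonomy, or if $f(\overbar\gamma_i)$ leaves $U$, these transports may differ by a nontrivial orthogonal transformation, and then the linear combination $\sum_i a^k_i\widetilde\nu_{y_i}$ you form may not equal $v_k(x_0)$, invalidating the Case II computation. To close this gap you would have to argue that the $\overbar y_i$ and the paths $\overbar\gamma_i$ can be chosen so that $f(\overbar\gamma_i)$ stays in a contractible neighborhood of $x_0$ — for instance by finding a single connected component of $f^{-1}$ of a small contractible $V\ni x_0$ which contains $\overbar x_0$ and whose boundary part surjects onto $V\cap\partial M$. This is plausible but requires a nontrivial degree-theoretic argument; the paper's passage to $\widetilde M$ sidesteps the issue completely, since on a simply connected flat manifold parallel transport is genuinely path-independent and the frame $\{w_i\}$ becomes global. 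The rest of your outline — the chart-overlap propagation, part $(i)$ via the vanishing of the second fundamental form, and part $(ii)$ via the Gram-matrix comparison of inner normals combined with the preimage hypothesis — matches the paper.
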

\begin{proof}
	Without loss of generality, we assume that both $N$ and $M$ are even dimensional. By assumption, we pick $n$ points $x_1,x_2,\ldots,x_n$ near the vertex $x\in\partial M$ such that the unit inner normal vectors $v_1,v_2,\ldots,v_n$ parallel transports to a spanning set of $T_xM$. Note that the parallel transport is independent of path near $x$ as $M$ is flat.
	
	Let $\overbar x$ be a preimage of $x$ in $\partial N$ and $\overbar x_i$ a preimage of $x_i$ such that $\overbar x_i$ lies in the interior of a face of $N$. Let $\overbar v_i$ be the unit inner normal vector of $\overbar x_i$. Fix a choice of $n$ paths from $\overbar x_i$ to $\overbar x$ and denote by $\widetilde v_i$ the parallel transport of $\overbar v_i$ to $\overbar x$. So far the vector $\widetilde v_i$ in $T_{\overbar x}N$ may depend on the choice of the paths.
	
	By the same proof of part (ii) of Theorem \ref{thm:strongflat}, we have that
	$$\langle\widetilde v_i,\widetilde v_j\rangle=\langle v_i,v_j\rangle,~\forall i,j,$$
	and	the parallel solution $\varphi$ in $S_N\otimes f^*S_M$ satisfies
	$$(\overbar\epsilon\overbar c(\widetilde v_i)\otimes 1)\varphi(\overbar x)=-(1\otimes c(v_i))\varphi(\overbar x).$$
	
	Since $v_1,\ldots,v_n$ spans $T_xM$, there exists an $n\times n$ matrix $(a_{ij})$ such that the vectors
	$$w_i\coloneqq \sum_{j=1}^n a_{ij}v_j$$
	form an orthonormal basis for $T_xM$. Therefore,
	$$\widetilde w_i\coloneqq\sum_{j=1}^n a_{ij}\widetilde v_j$$
	also form an orthonormal basis for $T_{\overbar x}N$.
	
	We first assume that $M$ simply connected. In this case, the vectors $\{w_i\}$ extends to global parallel sections of $TM$ by parallel transport, which we still denote by $\{w_i\}$. Now we are able to apply the same proof of Theorem \ref{thm:strongflat}. Let $\Lambda$ be the collection of all subsets of $\{1,2,\ldots,n\}$. For $\lambda\in \Lambda$, we define
$$\omega_\lambda=\wedge_{i\in \lambda}w_i,$$
which are parallel sections of $\Bigwedge^\ast TM$ and form an orthonormal basis of $\Bigwedge^\ast T_z M$ at every point $z\in  M$.  We define a collection of non-zero parallel sections
	$$\varphi_\lambda=(1\otimes \omega_\lambda)\varphi,~\forall\lambda\in\Lambda.$$
	Since $\varphi_\lambda$ is parallel, we see that
	$\langle \varphi_\lambda (x),\varphi_\mu(x)\rangle$ (as $x$ varies over $\widetilde N$) is a constant function  $\widetilde N$,  for any $\lambda,\mu\in \Lambda.$
	
	Note that $\varphi$ at $\overbar x$ satisfies
	$$(\overbar\epsilon\overbar c(\widetilde w_i)\otimes 1)\varphi(\overbar x)=-(1\otimes c(w_i))\varphi(\overbar x),~ \forall i=1,\ldots,n$$
	from the argument above. From the same argument in the proof of Theorem \ref{thm:strongflat}, we obtain that 
	$$\langle \varphi_\lambda(\overbar x),\varphi_\mu(\overbar x)\rangle=0,~\forall \lambda,\mu\in\Lambda,~\lambda\ne\mu.$$
	Therefore $\{\varphi_\lambda \}_{\lambda\in\Lambda}$ forms a parallel basis of $S_{\widetilde N}\otimes \widetilde f^*S_{\widetilde M}$. It follows that $N$ is flat.
	
	In general, if $M$ is not simply connected, we consider its universal cover $\widetilde M$. The map $f$ lifts to $\widetilde f\colon \widetilde N\to\widetilde M$, where $\widetilde N$ is the pullback cover of $N$. Now the bundle $S_N\otimes f^*S_M$ lifts to $\widetilde N$, the point $x\in\partial M$ lifts to a vertex of $\widetilde M$, and $\varphi$ lifts to a parallel section. The same proof as above shows that $\widetilde N$ is flat, hence $N$ is flat.
	
	Now suppose a codimension one face $F_i$ of $M$ is flat. The same computation from the proof of part $(i)$ of Theorem \ref{thm:strongflat} shows  the corresponding face $\overbar F_i$ of $N$ is also flat. This proves part $(i)$. 
	
	Now let us prove part $(ii)$. Since now all codimension one faces of $M$ are assumed to be flat,  it follows from part $(i)$ that all codimension one faces of $N$ are flat. In particular, the local geometry of $(N, \overbar g)$ near each codimension $k$ face $\overbar F^k$ is completely determined by the unit inner normal vectors $\{\overbar v_1, \cdots, \overbar v_\ell\}$ of the codimension one faces that contain $\overbar F^k$. The same remark also holds for $(M, g)$. By assumption, the preimage $f^{-1}(F_i)$ of each codimension one face $F_i$ of $M$ is equal to a codimension one face of $N$. It follows that $f$ maps each interior point $\overbar x$ of $\overbar F^k$ to an interior point of a codimension $k$ face of $M$. The same argument for part $(ii)$ of Theorem \ref{thm:strongflat} shows that  the relative positions of the above unit inner normal vectors $\{\overbar v_1, \cdots, \overbar v_\ell\}$ coincide with the relative positions of the unit inner normal vectors of the corresponding faces of $(M, g)$.   It follows that  $(N,\overbar g)$ at $\overbar x$ is  locally isometric to $(M, g)$ at $f(\overbar x)$. This  proves part $(ii)$, hence finishes the proof of the theorem. 
\end{proof}

It is clear that Theorem \ref{thm:polyhedra} is an immediate consequence of Theorem \ref{thm:flatWithVertex}. Now let us deduce  Theorem \ref{thm:stokereuclidean} from either Theorem \ref{thm:strongflat} or Theorem  \ref{thm:flatWithVertex}. 
\begin{proof}[Proof of Theorem \ref{thm:stokereuclidean}]
	Let $P_1$ and $P_2$ be two convex polyhedra in $\R^n$. By taking direct product with the unit interval $[0, 1]$ if necessary, we assume without loss of generality that $n$ is even.

	Since the combinatorial types of $P_1$ and $P_2$ are the same, there is a homeomorphism $f\colon P_1\to P_2$ that preserves the combinatorial structures and matches the dihedral angles. The map $f$ may not be smooth on the entire $N$, but it can be chosen to be a polytope map. For example, we may define $f$ first away from codimension $3$ faces, and then inductively extend $f$ radially to faces of higher codimensions. We identify the spinor bundle $S_{P_1}\otimes f^*S_{P_2}$ with the bundle of differential forms $\Bigwedge^\ast T\R^n$ over $P_1$.

	By part (ii) of Theorem \ref{thm:strongflat}, at any given vertex $x\in P_1$,  the relative positions of the unit inner normal vectors of codimension one faces coincide with the relative positions of the unit inner normal vectors of the corresponding faces of $P_2$. It follows that the corresponding face angles of $P_1$ and $P_2$ coincide. This finishes the proof.
\end{proof}

\section{Rigidity of flat domains with smooth boundary}\label{sec:smooth}
In this section, we investigate rigidity results for flat domains with smooth boundary. More precisely,  we prove a scalar-mean rigidity theorem for strictly convex domains with smooth boundary in Euclidean spaces (Theorem \ref{thm:balliso}). 

As a preparation, we first prove the following  proposition.

\begin{proposition}\label{prop:strictlyConvex}
	Let $(M, g)$ and $(N, \overbar g)$ be two compact $n$ dimensional manifolds with smooth boundary, and $f\colon N\to M$ a spin map. Suppose \begin{enumerate}[label=$(\arabic*)$]
		\item $M$ has nonzero Euler characteristic,
		\item the curvature operator of $(M,g)$ is non-negative,
		\item the second fundamental form of $\partial M$ is strictly positive,  
		\item $f$ is area-non-increasing on $N$  and $f$ is distance-non-increasing on $\partial N$, 
		\item the degree of $f$ is nonzero,	
		\item 
		$\Sc(\overbar{g})_x \geq \Sc(g)_{f(x)} = 0$ for all $x\in N$, 
		\item $H_{\overbar{g}}(\overbar{F}_i)_y \geq  H_{g}(F_i)_{f(y)}$ for all $y\in \partial N$. 
	\end{enumerate}   
	Then $f\colon \partial N\to\partial M$ is a local isometry.
\end{proposition}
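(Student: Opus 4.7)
My plan is to apply the index-theoretic machinery of Theorem \ref{thm:previous} to produce a nonzero parallel spinor on $N$, use its boundary condition to derive a pointwise algebraic identity between the Weingarten operators of $\partial N$ and $\partial M$, and then combine this identity with the mean-curvature equality via Horn's weak majorization inequality to force $f|_{\partial N}$ to be a local isometry. A smooth manifold with boundary is the special case of Definition \ref{def:polytopeboundary} having no codimension-two or higher faces, so Theorem \ref{thm:previous} applies directly here: the dihedral-angle assumption (c) is vacuous, and the strict positivity of the second fundamental form of $\partial M$ in hypothesis (3) is stronger than the non-negativity required for (b). This yields a nonzero section $\varphi \in H^1(N, S_N \otimes f^*S_M; B)$ with $\overbar\nabla \varphi = 0$, with $|\varphi|$ a nonzero constant (by Hermiticity of $\overbar\nabla$), and moreover $H_{\overbar g}(\partial N)_y = H_g(\partial M)_{f(y)}$ at every $y \in \partial N$.

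\textbf{Deriving the pointwise identity.} Let $\overbar\nu$, $\nu$ denote the inner unit normals, $W^N$, $W^M$ the associated Weingarten operators, and $\Phi := df|_{T\partial N}$. Since $\overbar\nabla \varphi = 0$ on $N$, differentiating $B\varphi = -\varphi$ tangentially yields $(\overbar\nabla_X B)\varphi = 0$ for all $X \in T\partial N$, which expands to
\[
\bigl(\overbar c(W^N X) \otimes c(\nu\circ f) + \overbar c(\overbar\nu) \otimes c(W^M f_*X \circ f)\bigr)\psi = 0, \qquad \psi := (\overbar\epsilon \otimes \epsilon)\varphi.
\]
Applying $\overbar c(\overbar\nu) \otimes c(\nu\circ f)$ to both sides and using $\overbar c(\overbar\nu)^2 = c(\nu)^2 = -1$ together with the anticommutation of Clifford multiplication by orthogonal vectors, the equation becomes $(A_X \otimes 1 + 1 \otimes A_X')\psi = 0$, where $A_X := \overbar c(\overbar\nu)\overbar c(W^N X)$ and $A_X' := c(\nu)c(W^M f_*X)$ satisfy $A_X^2 = -|W^N X|_{\overbar g}^2$ and $A_X'{}^2 = -|W^M f_*X|_g^2$. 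Squaring the identity $A_X \otimes 1 \cdot \psi = -1 \otimes A_X' \cdot \psi$ (and using that the two tensor factors commute) and invoking $\psi \ne 0$ yields the pointwise scalar identity $|W^N X|_{\overbar g} = |W^M f_*X|_g$ for every $X \in T\partial N$. Polarizing and using that $W^N, W^M$ are symmetric, this is equivalent to the endomorphism identity $A^2 = \Phi^T B^2 \Phi$ on $T\partial N$, where $A := W^N$ and $B := W^M$.

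\textbf{Horn's inequality gives the rigidity.} Let $a_i$ denote the eigenvalues of $A$ (real, as $A$ is symmetric) and $b_1 \geq \cdots \geq b_{n-1} > 0$ those of $B$ (strictly positive by hypothesis (3)). Since $A^2 = (B\Phi)^T(B\Phi)$, the multiset $\{|a_i|\}$ coincides with the multiset of singular values $\{\sigma_i(B\Phi)\}$. Combining the mean-curvature equality $\sum a_i = \sum b_i$, the bound $\sigma_i(\Phi) \leq \|\Phi\| \leq 1$ coming from the distance-non-increasing hypothesis, and Horn's weak majorization inequality $\sum \sigma_i(B\Phi) \leq \sum \sigma_i(B)\sigma_i(\Phi)$, I establish the chain
\[
\sum_i b_i \;=\; \sum_i a_i \;\leq\; \sum_i |a_i| \;=\; \sum_i \sigma_i(B\Phi) \;\leq\; \sum_i \sigma_i(B)\sigma_i(\Phi) \;\leq\; \sum_i b_i,
\]
forcing equality throughout. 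The final equality, combined with $b_i > 0$, forces $\sigma_i(\Phi) = 1$ for every $i$, so $\Phi$ is a linear isometry at each boundary point, and $f|_{\partial N}$ is a local isometry. The hardest step is the last: the strict positivity of the second fundamental form of $\partial M$ is indispensable, since any vanishing $b_i$ would be compatible with $\sigma_i(\Phi) < 1$ and destroy the rigidity; carefully tracking the Clifford sign conventions in the second step is the main technical computation.
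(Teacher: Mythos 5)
Your proof is correct, and while it starts from the same input as the paper---Theorem \ref{thm:previous} supplies the nonzero parallel section $\varphi$ satisfying the boundary condition $B$, together with the equality of mean curvatures---the rigidity mechanism you use afterwards is genuinely different. The paper stays inside the index-theoretic boundary-term analysis: it invokes the finer fact (from \cite[Lemma 2.3 \& Proposition 2.6]{Wang:2021tq}) that the operator inequality $\sum_{i,j}A(f_*\overbar e_i,e_j)\,\overbar c_\partial(\overbar e_i)\otimes c_\partial(e_j)\leq f^*(H_g)$ becomes an equality when applied to $\varphi$, and then an explicit Clifford computation based on the factorization $A=L^2$ and the singular value decomposition $f_*\overbar e_j=\alpha_j e_j$ forces $\sum_j A_{jj}\alpha_j^2=\sum_j A_{jj}$, hence $\alpha_j=1$. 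You instead differentiate the pointwise boundary condition tangentially along $\partial N$ using $\nabla\varphi=0$---the same maneuver the paper performs in the proof of part $(i)$ of Theorem \ref{thm:strongflat}---to obtain the pointwise identity $(W^N)^2=\Phi^T (W^M)^2\Phi$, and you then conclude by pure linear algebra: Horn's singular-value majorization combined with $\tr W^N=\tr W^M$ and $\|\Phi\|\leq 1$, which is a strengthened version of the trace trick $\tr(OA)\leq\tr(A)$ that the paper itself uses later in the proof of Theorem \ref{thm:ballAlmostIso}. Your route has the advantage of needing only Theorem \ref{thm:previous} as quoted (no appeal to the equality case of the boundary estimate from \cite{Wang:2021tq}), and it yields the relation between the squared shape operators as a useful by-product; the mild costs are the appeal to a standard matrix inequality and the regularity point---$\varphi$ is smooth up to $\partial N$, so the boundary condition may indeed be differentiated---which you should state explicitly, though it is the same regularity the paper itself uses when it differentiates the boundary condition elsewhere. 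The sign conventions ($c(\nu)^2=\pm 1$, Weingarten map versus its negative) do not affect your squared identity, so the computation is robust.
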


\begin{proof}
	The odd dimensional case can easily be reduced to the even dimensional case by considering $N\times [0,1]$ and $M\times[0,1]$ with the obvious product metrics. Therefore, we can assume without loss of generality  that $N$ and $M$ are even-dimensional. 
	
	Suppose $f(y) =x \in \partial M$ for a given point $y\in \partial N$. We diagonalize the metric at $y$ and $x$,  and choose an orthonormal basis $\{\overbar e_j \}_{1\leq j \leq n-1}$ of $T_y(\partial N)$ and an orthonormal basis $\{e_j \}_{1\leq j\leq n-1}$ of $T_x(\partial M)$ such that
	$$f_*\overbar e_j=\alpha_j e_j$$
	for some $\alpha_j\in [0,1]$, since $f$ is distance-non-increasing on $\partial N$. We denote by $\overbar e_n$ (resp. $e_n$)  the unit inner normal vector of $\partial N$ at $y$ (resp. of $\partial M$ at $x$).
	
	Let $A$ be the second fundamental form of $\partial M$. Let $\overbar c_\partial$ and $c_\partial$ be the boundary Clifford actions, that is, $\overbar c_\partial(\overbar e_j)=\overbar c_\partial(\overbar e_n)\overbar c_\partial(\overbar e_j)$ and $c_\partial(e_j)=c(e_n)c(e_j)$.

	Since $A$ is strictly positive, there is an invertible endomorphism $L\in\mathrm{End}(T_x(\partial M))$ such that $A=L^2$, that is,
	$$A(e_i,e_j)=\langle Le_i,Le_j\rangle.$$
	Let us write 
	$$Le_i=\sum_{1\leq j\leq n-1} L_{ij}e_j$$
	and
	$$\overbar L e_i=\sum_{1\leq j\leq n-1}\langle Le_i,f_*\overbar e_j\rangle\overbar e_j=\sum_{1\leq j\leq n-1} L_{ij}\alpha_j\overbar e_j .$$
	We have the following inequality (cf. \cite[Lemma 2.3]{Wang:2021tq})
	\begin{align*}
		&\sum_{i,j}A(f_*\overbar e_i, e_j)\overbar c_\partial(\overbar e_i)\otimes c_\partial(e_j)	=\sum_{i,j,k}\langle L(f_*\overbar e_i),e_k\rangle_M 
		\langle L(e_j),e_k\rangle_M  \overbar c_\partial(\overbar e_i)\otimes c_\partial(e_j)\\
		=&\sum_{k}\overbar c_\partial(\overbar Le_k)\otimes c_\partial(L e_k)\\
		=&-\frac 1 2\sum_k\Big(\overbar c_\partial(\overbar Le_k)^2\otimes 1+1\otimes c_\partial(L e_k)^2-\big(\overbar c_\partial(\overbar Le_k)\otimes 1+1\otimes c_\partial(L e_k)\big)^2\Big)\\
		\leq&-\frac 1 2\sum_k \overbar c_\partial(\overbar Le_k)^2\otimes 1 - \frac{1}{2}\sum_k 1\otimes   c_\partial(Le_k)^2,
	\end{align*}
	where the last inequality  follows from the fact  the element
	\[  \big(\overbar c_\partial(\overbar Le_k)\otimes 1+1\otimes  c_\partial(L e_k)\big) \] is skew-symmetric, hence its square is non-positive.  
	Note that
	\begin{align*}
		\sum_k c_\partial(Le_k)^2=&\sum_{k,j}L_{kj}L_{kj}c_\partial(e_j)^2+\sum_k\sum_{i\ne j}L_{ki}L_{kj}c_\partial(e_i)c_\partial(e_j)\\
		=&-\sum_{j}A_{jj}+\sum_{i\ne j}A_{ij}c_\partial(e_i)c_\partial(e_j)=-H_g,
	\end{align*}
	and
	\begin{align*}
		\sum_k \overbar c_\partial(\overbar Le_k)^2=&\sum_{k,j}\alpha^2_jL_{kj}L_{kj}\overbar c_\partial(\overbar e_j)^2+\sum_k\sum_{i\ne j}\alpha_i\alpha_jL_{ki}L_{kj}c_\partial(e_i)c_\partial(e_j)\\
		=&-\sum_{j}A_{jj}\alpha^2_j+\sum_{i\ne j}\alpha_i\alpha_jA_{ij}c_\partial(e_i)c_\partial(e_j)\\
		=&-\sum_j A_{jj}\alpha^2_j\geq-\sum_j A_{jj}=-H_g.
	\end{align*}
	To summarize, we have 
	$$\sum_{i,j}A(f_*\overbar e_i,e_j)\overbar c_\partial(\overbar e_i)\otimes c_\partial(e_j)\leq f^\ast(H_g)$$
	at $y\in \partial N$. 
	
	Let  $\varphi$ be a parallel section of  $S_N\otimes f^*S_M$ as in the proof of Theorem \ref{thm:strongflat}. The fact that $\varphi$ satisfies $D\varphi =0$ together with the assumptions on comparisons of scalar curvature and mean curvature show that the above inequality 
	$$\sum_{i,j}A(f_*\overbar e_i,e_j)\overbar c_\partial(\overbar e_i)\otimes c_\partial(e_j)\leq f^*(H_g)$$
	becomes equality at $\varphi$ (cf. \cite[Lemma 2.3 \& Proposition 2.6]{Wang:2021tq}), that is, 
	\begin{equation}\label{eq:meanCurvatureEquality}
		\big(\sum_{i,j}A(f_*\overbar e_i,e_j)\overbar c_\partial(\overbar e_i)\otimes c_\partial(e_j)\big)\varphi=f^*(H(g))\cdot\varphi.
	\end{equation}
	It follows that   
	$$\sum_j A_{jj}\alpha^2_j=\sum_j A_{jj}.$$
	Since $A$ is strictly positive, $A_{jj}>0$ for all $1\leq j\leq n-1$. Therefore
	$\alpha_j=1$ for all $1\leq j\leq n-1$, which shows that  $f_*\colon T_y(\partial N)\to T_x(\partial M)$ is an isometry. It follows that $f\colon \partial N \to \partial M$ is a local isometry. This finishes the proof. 
\end{proof}

Before we prove Theorem \ref{thm:balliso}, we first prove the following variant of Theorem  \ref{thm:balliso}. Note that the assumption on the map $f$ in  Theorem \ref{thm:ballAlmostIso} below is weaker than the corresponding assumption on $f$ in Theorem \ref{thm:balliso}. On the other hand, the conclusion of  Theorem \ref{thm:ballAlmostIso} below is also weaker than the conclusion of  Theorem \ref{thm:balliso}. 
\begin{theorem}\label{thm:ballAlmostIso}
	Let $(M,g)$ be a strictly convex domain  with smooth boundary in $\R^n$ \textup{($n\geq 2$)}. Let $(N,\overbar g)$ be an $n$ dimensional spin Riemannian manifold with boundary and $f\colon N\to M$ be a spin map. If
	\begin{enumerate}[label=$(\arabic*)$]
		\item 
		$\Sc(\overbar{g})_x \geq \Sc(g)_{f(x)} = 0$ for all $x\in N$, 
		\item $H_{\overbar{g}}(\partial N)_y \geq  H_{g}(\partial M)_{f(y)}$ for all $y\in \partial N$, 
		\item $\partial f \coloneqq f|_{\partial N} \colon \partial N\to \partial M$ is distance-non-increasing, 
		\item the degree of $f$ is nonzero,	
	\end{enumerate} 
	then $N$ is also a strictly convex domain in $\R^n$. Moreover, up to an affine isometry of $\R^n$,  $N$ coincides with $M$ and $\partial f\coloneqq f|_{\partial N} \colon \partial N\to \partial M$ becomes the identity map. 
\end{theorem}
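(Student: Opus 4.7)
The plan is to reduce Theorem \ref{thm:ballAlmostIso} to Proposition \ref{prop:strictlyConvex} by replacing $f$ with a distance-non-increasing extension of $\partial f$, and then to upgrade the resulting flatness of $(N, \overbar g)$ to a global isometric identification with $(M, g)$.

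For the reduction, let $\iota: \partial M \hookrightarrow \R^n$ be the inclusion. The composition $\iota \circ \partial f: \partial N \to \R^n$ is $1$-Lipschitz, so Kirszbraun's extension theorem provides a $1$-Lipschitz extension $\Phi: N \to \R^n$. Composing with the nearest-point projection $\pi: \R^n \to M$, which is $1$-Lipschitz by convexity of $M$, gives a map $\tilde f \coloneqq \pi \circ \Phi: N \to M$ that is $1$-Lipschitz globally and coincides with $\partial f$ on $\partial N$. A standard mollification (followed by reprojection onto $M$) renders $\tilde f$ smooth in the interior with only an arbitrarily small loss in the Lipschitz constant. Since $TM$ is trivial, $\tilde f$ is automatically a spin map; since $M$ is contractible, $\tilde f$ is homotopic to $f$ rel $\partial N$, so $\deg(\tilde f) = \deg(f) \neq 0$. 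The scalar and mean curvature hypotheses depend only on $\overbar g$ and $g$ and hence transfer. Proposition \ref{prop:strictlyConvex} now applies to $\tilde f$; its conclusion gives that $\partial f$ is a local isometry, and its proof produces a nonzero parallel section $\varphi \in H^1(N, S_N \otimes \tilde f^* S_M; B)$, together with the equality of the mean-curvature estimate that appears there.

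Next I would extract flatness of $\overbar g$ and matching second fundamental forms. Since $M$ is flat with trivial tangent bundle, the standard basis of $\R^n$ yields $n$ parallel sections $v_1, \dots, v_n$ of $TM$, and for each $\lambda \subset \{1, \dots, n\}$ the exterior product $\omega_\lambda = \wedge_{i \in \lambda} v_i$ is a parallel section of $\wedge^* TM$. Running the parallel-basis argument from the proof of Theorem \ref{thm:strongflat}, the sections $\varphi_\lambda \coloneqq (1 \otimes c(\omega_\lambda)) \varphi$ are parallel, and using the boundary condition on $\varphi$ together with the linear functions $y \mapsto \langle y, v_k \rangle$ attaining their minima on $M$ at boundary points, one shows that they are mutually orthogonal. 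Hence they form an orthonormal frame of $S_N \otimes \tilde f^* S_M$ at every point, forcing the curvature of $S_N$ to vanish and therefore $\overbar g$ to be flat. The equality in the mean-curvature estimate, combined with strict positivity of the second fundamental form of $\partial M$, further forces $f_* \overbar e_i = e_i$ along $\partial N$ and the full matching of second fundamental forms $A_{\overbar g}(\partial N) = (\partial f)^* A_g(\partial M)$.

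Finally I would pass from these infinitesimal data to a global isometry. With $\overbar g$ flat, the universal cover $\widetilde N$ admits a developing map $\mathrm{dev}: \widetilde N \to \R^n$ that is a smooth local isometry up to the boundary. Normalizing $\mathrm{dev}$ so that on a chosen lift of one boundary component it agrees with the composition of the covering projection with $\partial f$, the matching of second fundamental forms together with strict convexity of $\partial M$ forces $\mathrm{dev}$ to map a one-sided collar of $\partial \widetilde N$ into $M$; propagation along the flat structure then places all of $\mathrm{dev}(\widetilde N)$ inside $M$. Simple connectivity of $M$ makes this local isometry into $M$ a diffeomorphism, and the absence of nontrivial isometries of $M$ fixing a full boundary component rules out nontrivial deck transformations. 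Hence $\widetilde N = N$ and $\mathrm{dev}$ realizes the desired affine isometric identification of $N$ with $M$, under which $\partial f$ becomes the identity. The main obstacle is this final step: one must verify that the developing map extends continuously to all boundary components of $\widetilde N$ and rule out configurations in which $\partial N$ covers $\partial M$ with multiplicity greater than one, for which the flatness of $N$ together with the constraint $\deg(f) \neq 0$ (forcing at least one boundary component to map with nonzero degree onto $\partial M$) are the essential ingredients.
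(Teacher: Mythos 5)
Your overall skeleton (a parallel spinor, flatness via the argument of Theorem \ref{thm:strongflat}, a boundary local isometry via Proposition \ref{prop:strictlyConvex}, matching of second fundamental forms, then a developing map) is the same as the paper's, but three steps are genuinely problematic. First, the Kirszbraun reduction is both unnecessary and invalid as stated: Kirszbraun's theorem extends $1$-Lipschitz maps between (subsets of) Hilbert spaces, whereas your source $(N,\overbar g)$ is an arbitrary Riemannian manifold, not yet known to be flat or nonnegatively curved, and a $1$-Lipschitz map $\partial N\to\R^n$ need not admit a $1$-Lipschitz extension to all of $N$ (a coordinatewise McShane extension costs a dimensional factor). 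Moreover, mollification loses the Lipschitz constant and smoothness up to $\partial N$, so the hypotheses of Proposition \ref{prop:strictlyConvex} (area-non-increasing on all of $N$, smooth boundary data) are still not literally satisfied after your modification. The paper avoids this entirely: since $M$ is flat, $\Sc(g)=0$ and the interior non-increasing hypothesis is not needed in the index-theoretic argument, so Theorem \ref{thm:previous}/Theorem \ref{thm:strongflat} and the boundary argument of Proposition \ref{prop:strictlyConvex} apply to $f$ itself under condition (3) alone.

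Second, your claim that ``the equality in the mean-curvature estimate \ldots forces \ldots the full matching of second fundamental forms'' is a gap: the second fundamental form $\overbar A$ of $\partial N$ enters the boundary term of the Weitzenb\"ock/index argument only through its trace $H_{\overbar g}$, and the equality case in Proposition \ref{prop:strictlyConvex} yields only $\alpha_j=1$, i.e.\ that $\partial f$ is a local isometry; it gives no direct control on $\overbar A$. The paper needs a separate, nontrivial argument: writing the unit normals as combinations of parallel-transported normal fields and invoking conclusion $(iii)$ of Theorem \ref{thm:strongflat} gives $\overbar A^{\,2}=A^2$, and then the orthogonal matrix $O=\overbar A A^{-1}$ together with $\tr(\overbar A)=\tr(A)$ and strict positivity of $A$ forces $O=\mathrm{id}$. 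Third, the globalization you yourself flag as ``the main obstacle'' is precisely where the remaining work lies, and you leave it unresolved: one must show that $\partial N$ is connected (the paper uses a shortest geodesic between boundary components together with strict convexity), that $p(\partial\widetilde N)$ is congruent to $\partial M$ (the fundamental theorem of hypersurfaces, using the matched first and second fundamental forms), that no interior point of $\widetilde N$ is mapped to $p(\partial\widetilde N)$ (a lifting/geodesic argument), and that $p\colon\widetilde N\to M$ is a covering map, hence an isometry since $M$ is simply connected, after which injectivity of the covering projection on $\partial\widetilde N$ gives $\widetilde N=N$ and identifies $\partial f$ with the identity. Without these arguments the proposal does not yet constitute a proof.
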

\begin{proof}
		We assume that $n=\dim N=\dim M$ is even.	The odd dimensional case reduces to the even dimensional case by considering $f\times \id \colon N\times[0,1]\to M\times[0,1]$.
	
	In even dimensions, 	as a special case of Theorem \ref{thm:previous}, the vector bundle $S_N\otimes f^*S_M$  admits a non-zero parallel section $\varphi$ satisfying the boundary $B$, and we have
	\begin{enumerate}[label=$(\arabic*)$]
		\item 
		$\Sc(\overbar{g})_x = \Sc(g)_{f(x)} = 0$ for all $x\in N$, 
		\item $H_{\overbar{g}}(\partial N)_y =  H_{g}(\partial M)_{f(y)}$ for all $y\in \partial N$.
	\end{enumerate}  
	We mention that, in the even dimensional case, this also follows from \cite[Theorem 1.1]{Lottboundary}. Moreover, since $\partial M$ is strictly convex, it follows from Proposition \ref{prop:strictlyConvex} that  $f\colon \partial N\to \partial M$ is a local  isometry. 
	
	By Theorem \ref{thm:strongflat}, $(N,\overbar g)$ is also flat. We now show that $f$ preserves the second fundamental forms of $\partial N$ and $\partial M$. For any $x\in\partial N$, we choose $n$ points $\{x_1,\ldots,x_n\}$ near $x$ in $\partial N$ so that the set $\{v_1, \ldots, v_n\}$ is linearly independent in $\R^n$, where $v_i$ is the unit inner normal vector of $\partial M$  at $f(x_i)$. Such a set of points  always exits since $\partial M$ is strictly convex. Let $\overbar v_i$ be the inner normal vector of $\partial N$ at $x_i$. The parallel transport of each $\overbar v_i$ gives a parallel vector field near $x$, which we still denote by $\overbar v_i$. By $(iii)$ in Theorem \ref{thm:strongflat}, we have
	$$\langle \overbar  v_i,\overbar v_j\rangle=\langle  v_i, v_j\rangle,~\forall i,j.$$
	In particular, the set of vectors $\{\overbar v_1, \ldots, \overbar v_n\}$ is also linearly independent.
	
	Let $\overbar v$ (resp. $ v$)  be the unit inner normal vector field of $\partial N$ (resp. $\partial M$) near $x$ (resp. $f(x)$). By $(iii)$ in Theorem \ref{thm:strongflat} we have that $\langle \overbar v,\overbar v_i\rangle=\langle v, v_i\rangle$ for all  $1\leq i\leq n$. Therefore, $\overbar v$ and $ v$ can be  written as linear combinations of $\overbar v_i$'s and $ v_i$'s with the same coefficients. In other words, there are smooth functions $k_1,\ldots,k_n$ defined on a neighborhood of $x$ in $\partial M$ such that
	$$ v=\sum_{i=1}^n k_i v_i \text{ and }\overbar v=\sum_{i=1}^n (f^*k_i)\overbar v_i.$$
	Let $\overbar w$ be an arbitrary  vector field tangent to $\partial N$. Since $ v_i$ and $\overbar  v_i$ are parallel, we have
	$$\nabla_{f_*\overbar w}^M v=\sum_{i=1}^n f_*\overbar w(k_i)\cdot  v_i \textup{ and }\nabla^N_{\overbar w}\overbar v=\sum_{i=1}^n \overbar w(f^*k_i)\cdot \overbar v_i.$$
	Note that $\overbar w(f^*k_i)=f^*(f_*\overbar w(k_i))$ by the chain rule. Therefore for any vector fields $\overbar w, \overbar u$ tangent to  $\partial N$, we have
	$$\langle\nabla^N_{\overbar w}\overbar v,\nabla^N_{\overbar u}\overbar  v\rangle=\langle \nabla^M_{f_*\overbar w} v,\nabla^M_{f_*\overbar u} v\rangle.$$
	Let $\{\overbar w_1, \ldots, \overbar w_{n-1}\}$ be a local orthonormal basis of $T\partial N$ near $x$. As $f$ is a local  isometry from $\partial N$ to $\partial M$, $\{f_*\overbar w_1, \ldots, f_*\overbar w_{n-1}\}$ is also a local orthonormal basis of $T\partial M$ near $f(x)$. Let $A=(A_{jk})$ and $\overbar A=(\overbar A_{jk})$ be the second fundamental forms of $\partial M$ and $\partial N$, that is,  
	$$\nabla^N_{\overbar w_j}\overbar v=-\sum_{k=1}^{n-1}\overbar A_{jk}\overbar w_k,\text{ and }\nabla^N_{f_*\overbar w_j} v=-\sum_{k=1}^{n-1}A_{jk}f_*\overbar w_k.$$
	Since $M$ is strictly convex, we assume without loss of generality that $A$ is a diagonal matrix with positive diagonal entries. 
	
	By rewriting   $\langle\nabla^N_{\overbar w}\overbar v,\nabla^N_{\overbar u}\overbar  v\rangle=\langle \nabla^M_{f_*\overbar w} v,\nabla^M_{f_*\overbar u} v\rangle$ in terms of the above  matrix entries,  we obtain that 
	\[ \overbar A^{\, 2}=A^2. \] Since $\overbar A$ is symmetric, we have that
	$O=\overbar AA^{-1}$ is an orthogonal matrix. Note that
	$$\tr(\overbar A)=\tr(OA)=\sum_{j=1}^{n-1} O_{jj}A_{jj}\leq \sum_{j=1}^{n-1} A_{jj}=\tr(A),$$
	where the second equality is because  $A$ is diagonal, and the third inequality is because $|O_{jj}|\leq 1$, since $O$ is orthogonal. 
	Recall that the mean curvature of $N$ and $M$ are equal, that is, $\tr(\overbar A)=\tr(A)$. This implies that  $O_{jj} =1$ for each $1\leq j\leq n-1$. Therefore, $O$ is the identity matrix. It follows that $f$ preserves the second fundamental forms.

	Let $\widetilde N$ be the universal cover of $N$ with the lift metric, and $\pi\colon \widetilde N\to N$ the covering map.	
	Fix a point $\tilde x$ in $\widetilde N$ and an orthonormal frame at $\tilde x$. The parallel transport of this orthonormal frame at $\tilde x$ defines a set of global  orthonormal basis $\{\overbar e_1, \ldots, \overbar e_n\}$ of $T\widetilde N$, where each $\overbar e_i$ is parallel. For any $\tilde y\in \widetilde N$, choose a smooth path $\gamma$ connecting $\tilde x$ and $\tilde y$, and define 
	\[ \tilde y_i \coloneqq \int_{\gamma} \langle \dot \gamma, \overbar e_i\rangle  \]
	where $\dot \gamma$ is the tangent vector of $\gamma$. Since $\widetilde N$ is simply connected and flat, the above integral is independent of the choice  of $\gamma$ among all smooth curves connecting $\tilde x$ and $\tilde y$. These functions $\tilde y_i\colon \widetilde N\to\R$   together give rise to a map $p\colon \widetilde N\to \R^n$ that is locally isometric. 
	
	Our next step is to show that $p\colon \widetilde N \to p(\widetilde N)$ is a Riemannian covering map.  First we   show that $\partial N$ has only one connected component. Otherwise, fix a connected component $C$ of $\partial N$. The distance from $C$ to $\partial N-C$ is positive, and is attained by some geodesic $\gamma$ connecting $x\in C$ and $y\in C'$, where $C'$ is a connected component of $\partial N - C$. Since the length of $\gamma$ is the minimum among all curves connecting $C$ to $\partial N - C$, it follows that $\gamma$ is orthogonal to both $C$ and $C'$, and lies in the interior of $N$ except the two end points. Let $U$ be a small neighborhood of $\gamma$. 
	Since $N$ is flat, $U$ embeds isometrically into $\R^n$. Such an embedding maps $\gamma$  to a  line segment. Now since both $C$ and $C'$ are strictly convex, any line segment from $C$ to $C'$ inside $U$ parallel to $\gamma$ shorten the distance. This contradicts  the minimality of the chosen geodesic $\gamma$ and proves the claim.

	The exact same argument above also shows that $\partial \widetilde N$ has only one connected component.	Therefore $p(\partial\widetilde N)$ is connected, and $p\colon\partial\widetilde N\to p(\partial\widetilde N)\subset\R^n$ is a Riemannian submersion. For any $x\in \partial\widetilde N$, let $U_x$ be a small neighborhood near $x$ in $\partial\widetilde N$ such that $p|_{U_x}$ is a Riemannian embedding. Then $f\circ\pi\circ p^{-1}$ is an isometry from $p(U_x)$ to a small neighborhood $V_{\pi\circ f(x)}$ of $\pi\circ f(x)$ in $\partial M$, both of which are hypersurfaces in $\R^n$, and preserves the second fundamental form. In other words, the two hypersurfaces $p(U_x)$ and $V_{\pi\circ f(x)}$ in $\R^n$ have the same first and second fundamental forms. Therefore, it follows from the uniqueness of the solution to the partial differential equations describing the hypersurface that $p(U_x)$ and $V_{\pi\circ f(x)}$ are the same in $\R^n$ up to an orthogonal transform. Therefore, by continuously extend the argument to the whole $\partial\widetilde N$, we see that $p(\partial \widetilde N)$ and $\partial M$ in $\mathbb R^n$ are equal in $\R^n$ up to an orthogonal transform.  Moreover, $p$ restricted to $\partial\widetilde N$ is a Riemannian covering map.
	
	We claim that if $p(x)\in p(\partial\widetilde N)$, then $x\in\partial\widetilde N$. In other words, the map $p$ will never map an interior point of $\widetilde N$ to $p(\partial \widetilde N)$.   Assume to the contrary that there exists $x$ in the interior of $\widetilde N$ such that $p(x)\in p(\partial\widetilde N)$. The distance from $x$ to $\partial \widetilde N$ is attained by a unique geodesic segment  $\gamma$ from $x$ to  a point $y\in \partial \widetilde N$. Note that $\gamma$ is orthogonal to $\partial\widetilde N$. As $p$ is a local isometry, $p(\gamma)$ is a non-trivial line segment in $\R^n$ from  $p(x)$ to $p(y)$, which is orthogonal to $p(\partial \widetilde N)$ at $p(y)$. Since $\partial\widetilde N$ is convex, the vector in $\mathbb R^n$ from $p(y)$ to $p(x)$  is pointing inward (with respect to $p(\partial \widetilde N)$). Therefor $p(\gamma)$ lies entirely in the  inside\footnote{We have already shown that $p(\partial \widetilde N)$ is strictly convex smooth compact hypersurface in $\mathbb R^n$. It follows that $p(\partial \widetilde N)$ separates $\mathbb R^n$ into two parts. That is, $\mathbb R^n - p(\partial \widetilde N)$ consists of two connected components, exactly one of which is compact. We call the compact connected component of $\mathbb R^n - p(\partial \widetilde N)$ the inside of $p(\partial \widetilde N)$, and  the noncompact connected component of $\mathbb R^n - p(\partial \widetilde N)$ the outside of $p(\partial \widetilde N)$. } of  $p(\partial\widetilde N)$. Let $\alpha
	\colon [0, 1] \to p(\partial \widetilde N)$ be a smooth path in $p(\partial \widetilde N)$ with $\alpha(0)=p(y)$ and $\alpha(1)=p(x)$. Since $p$ is a covering map on $\partial\widetilde N$, $\alpha$ lifts uniquely to a path $\widetilde \alpha$ such that $\widetilde \alpha(0)=y$. As $p$ is a local isometry near $y$, there is a unique geodesic $\gamma_t$ connecting $y$ and  $\widetilde\alpha(t)$ for all sufficiently small $t\in [0, 1]$, which is mapped isometrically under the map $p$ to the line segment connecting $p(y)$ and $\alpha(t)$. Since $p$ is a local isometry everywhere, we can continue the construction of such geodesics $\gamma_t$ for all $t\in [0, 1]$ . In particular, $p(\gamma_1)$ coincides with $p(\gamma)$. By construction, $\gamma_1$ and $\gamma$ have the same length and point towards the same direction starting from $y$. It follows that   $x$  coincides with the other end point $\widetilde \alpha(1)$ of $\gamma_1$, which lies in $\partial \widetilde N$ by construction. This contradicts the assumption that $x$ lies in the interior of $\widetilde N$. This finishes the proof of the claim. Note that the same argument also proves that every point in the inside of $p(\partial\widetilde N)$ admits at least one preimage in $\widetilde N$.
	
	The interior $\widetilde N - \partial\widetilde N$  of $\widetilde N$ is connected and $p(\widetilde N - \partial\widetilde N)$ is disjoint from $p(\partial\widetilde N)$, so $p(\widetilde N - \partial\widetilde N)$ lies entirely in the inside of $p(\partial\widetilde N)$. To summarize, we see that $p(\widetilde N)$ is precisely the region enclosed by   the hypersurface $p(\partial\widetilde N)$ in $\mathbb R^n$. As $p(\partial\widetilde N)$ coincides with $\partial M$ up to an affine isometry, $p(\widetilde N)$ coincides with $M$ up to an affine isometry. Without loss of generality, we may assume that $p(\widetilde N)=M$. 
	
	Now we show that $p\colon \widetilde N \to p(\widetilde N) = M$ is a covering map. Indeed, if $z$ is a point in the interior of $M$, then its preimage $p^{-1}(z)$ consists of only interior points of $\widetilde N$. Let $\varepsilon$ be the distance from $z$ to $\partial M$. Then the $\varepsilon$-neighborhood of each point in $p^{-1}(z)$ is mapped  isometrically under the map $p$ to the $\varepsilon$-neighborhood of $z$. In particular,  the $(\varepsilon/2)$-neighborhoods of points in $p^{-1}(z)$ are disjoint in  $\widetilde N$. The same holds when $z$ lies in $\partial M$, as each point in its preimage lies in $\partial \widetilde N$. 
	
	As $M$ is simply connected,  $p$ has to be the trivial covering map, hence  an isometry. In particular, $p\colon \partial \widetilde N\to \partial M$ is a homeomorphism. Let $\pi\colon\widetilde N\to N$ be the corresponding covering map for the universal cover $\widetilde N$ of $N$. Note that $f\circ \pi=p$ on $\partial\widetilde N$. Therefore the restriction  $\pi$ on $\partial\widetilde N$ is injective. It follows that $\widetilde N=N$, $\pi$ is the identity map and $\partial f\coloneqq f|_{\partial N}$ is an isometry. 
	To summarize, we have proved that $N $ is a strictly convex domain in $\mathbb R^n$ such that,  up to an affine isometry, $N$ coincides with $M$ and $\partial f$ becomes the identity map. 
\end{proof}

\begin{remark}
	Since $\partial N$ is connected, $f\colon \partial N\to\partial M$ is a finite covering map for $n=2$, or an isometry if $n\geq 3$. As $N$ is flat and $f$ is distance-non-increasing and preserves second fundamental form, the mean curvature flow of $\partial N$ inside $N$ behaves the same as the mean curvature flow of $\partial M$ in $\R^n$. It follows from \cite[Theorem 1.1]{MR772132} that the mean curvature flow is smooth for all time until it converges to a point. This shows that $N$ is topologically a ball foliated by the hypersurfaces generated by the mean curvature flow of $\partial N$, hence simply connected. Now a similar argument from the above proof  shows that $N $ is a strictly convex domain in $\mathbb R^n$ such that,  up to an affine isometry, $N$ coincides with $M$ and $\partial f$ becomes the identity map.  This gives an alternative proof of Theorem \ref{thm:ballAlmostIso}.
\end{remark}

In general, although the map $p\colon N\to M$ we constructed in the proof of Theorem \ref{thm:ballAlmostIso} is an isometry, the original map $f$ may not be an isometry on the whole $N$. For example, if $N$ and $M$ are both the unit disk $\mathbb B^2$ in $\R^2$, then any smooth map $f\colon \mathbb B^2\to \mathbb B^2$ with   $\partial f = \id \colon \partial \mathbb B^2\to \partial \mathbb B^2$  satisfies the assumptions of  Theorem \ref{thm:ballAlmostIso}. Even if we further assume that $f$ is area-non-increasing on the whole $\mathbb B^2$, $f$ may still not be an isometry. For example, let  $f\colon \mathbb B^2 \to \mathbb B^2$ be given in polar coordinates by
$$re^{i\theta}\mapsto re^{i(\theta+\rho(r))}$$
where $\rho\colon [0,1]\to \mathbb R$ is any smooth function that vanishes near $0$ and $1$.
Note that such a map $f$ is always area-preserving and has degree one, but  certainly not an isometry in general. On the other hand, if we furthermore assume  $f$ to be distance non-increasing on the whole $\mathbb B^2$, then it turns out that $f$ itself has to be an isometry, which is the content of Theorem \ref{thm:balliso}.

Now let us prove Theorem \ref{thm:balliso}. 

\begin{proof}[{Proof of Theorem \ref{thm:balliso}}]
	With the same notation as in the proof of Theorem \ref{thm:ballAlmostIso}, we have an isometry $p\colon N\to M$ such that the map $h\coloneqq f\circ p^{-1}\colon M \to N \to M$ is distance non-increasing, and is equal to the identity map when restricted to $\partial M$. To prove the theorem, it suffices to show that any such map $h$ has to be the identity map on $M$. Let $x_1$ and $x_2$ be two arbitrary points on $\partial M$. Since $M$ is strictly convex, there is a unique   line segment $\ell$ connecting $x_1$ and $x_2$ that  lies entirely in $M$. Then $h(\ell)$ is a curve in $M$ connecting  $x_1$ and $x_2$, hence its length is at least the length of $\ell$. Since $h$ is distance non-increasing, it follows that $h$ maps   $\ell$ to itself isometrically. Note that all such line segments cover the whole $M$. This completes the proof.
	
\end{proof}

\end{document}